\documentclass[reqno,11pt]{article}
\usepackage{amsmath, amsfonts, amsthm, amscd, amssymb}
\usepackage{thmtools, thm-restate}
\usepackage[backend=biber, style=numeric]{biblatex}
\usepackage{indentfirst, bm, mathtools}
\usepackage{geometry}
\usepackage[T1]{fontenc}
\usepackage{lmodern}
\usepackage{tikz}
\usepackage{authblk, fancyhdr}
\usepackage[colorlinks=true]{hyperref}
\usepackage[justification=centering]{caption}

\newtheorem{theorem}{Theorem}[section]
\newtheorem{proposition}{Proposition}[section]
\newtheorem{lemma}{Lemma}[section]
\newtheorem{corollary}{Corollary}[section]
\newtheorem{example}{Example}[section]

\theoremstyle{definition}
\newtheorem{definition}{Definition}[section]

\theoremstyle{remark}
\newtheorem*{remark}{Remark}
\numberwithin{equation}{section}

\usetikzlibrary{arrows.meta, calc, positioning, shapes, decorations.markings,decorations.pathmorphing, bending}
\definecolor{surfaceblue}{RGB}{91,153,194}
\definecolor{surfaceedge}{RGB}{30,79,113}
\definecolor{glueboundary}{RGB}{119,74,151}

\tikzset{
    surface/.style      = {draw = surfaceedge, line width    = 0.9pt, top color = surfaceblue!8, bottom color = surfaceblue!42, line join = round},
    contour/.style      = {draw = surfaceedge!65, line width = 0.45pt, opacity  = 0.55},
    seam/.style         = {draw = glueboundary, line width   = 1.7pt},
    title/.style        = {font = \large, text               = surfaceedge},
    object label/.style = {text = surfaceedge},
    note/.style         = {text = black!68, align            = center}
}

\DeclareMathOperator{\area}{Area}
\DeclareMathOperator{\vol}{Vol}
\DeclareMathOperator{\rank}{rank}
\DeclareMathOperator{\sing}{Sing}
\DeclareMathOperator{\reg}{Reg}

\title{The Ergodicity of Geodesic Flows on Rank One Manifolds of Nonpositive Curvature}

\author[1]{Fei Liu\thanks{liufei@math.pku.edu.cn}}
\affil[1]{\small College of Mathematics and Systems Science, Shandong University of Science and Technology, Qingdao, 266590, China}

\author[2]{Xiaokai Liu\thanks{liuxk2024@mail.sustech.edu.cn}}
\affil[2]{\small Department of Mathematics, Southern University of Science and Technology, Shenzhen, 518055, China}

\begin{document}
\maketitle
\begin{center}
	\textit{Dedicated to Professor Lan Wen on the occasion of his eightieth birthday.}
\end{center}
\vspace{0.5em}

\begin{abstract}
	In this paper, we study the ergodicity of the geodesic flows on closed rank one manifolds of nonpositive sectional curvature. We introduce a subset of the manifold defined by the infinite-order vanishing of the Gaussian curvature in dimension two, or of the fiberwise second moment of the reduced Jacobi determinant in arbitrary dimensions, and derive bounds on the Liouville measure and Hausdorff dimension of the singular set in terms of this subset. In particular, if this subset has zero volume, then the singular set has zero Liouville measure, and the geodesic flow is ergodic with respect to Liouville measure. 

    By extending this method to real analytic metrics, we characterize the singular set as the set of zeros of a nontrivial real analytic determinant constructed from curvature operators, thereby establishing ergodicity in this setting without any additional assumptions.

	\textbf{Keywords}: geodesic flows; ergodicity; rank one manifolds; singular set; infinite-order vanishing of curvature.

	\textbf{2020~MSC}: 37D40, 37D25
\end{abstract}

\section{Introduction}
The relationship between curvature and the behavior of the geodesic flow is a classical theme in differential geometry and dynamical systems. On closed surfaces of constant negative curvature, Hedlund first established ergodicity with respect to the Liouville measure~\cite{Hedlund1934}. Hopf later proved the corresponding result for surfaces of variable negative curvature~\cite{Hopf1939}. In the 1960s, the work of Anosov and Sinai established uniform hyperbolicity and ergodicity for geodesic flows on closed negatively curved manifolds in arbitrary dimension~\cite{Anosov1967, AnosovSinai1967}.

On manifolds of nonpositive curvature, the presence of zero curvature allows parallel normal Jacobi fields and makes the rank a fundamental geometric invariant. A geodesic has rank one if it does not admits nonzero parallel normal Jacobi fields, and a manifold is rank one if it contains at least one rank one geodesic. The structural theory developed by Ballmann, Brin, and Eberlein established the geometric framework for studying these manifolds~\cite{BallmannBrinEberlein1985}. The rank rigidity theorem of Burns and Spatzier shows that a closed manifold of nonpositive curvature with irreducible universal cover and rank at least two is a locally symmetric~\cite{BurnsSpatzier1987}. Thus, rank one manifolds provide a natural setting for studying which dynamical properties of negative curvature persist in the presence of zero curvature.

For a closed rank one manifold of nonpositive curvature, write $\reg$ for the set of rank one vectors, which is open, dense and invariant under the geodesic flow; and write $\sing$ for the complement, which is closed and also invariant. Eberlein characterized the uniform hyperbolicity by the absence of singular vectors~\cite{Eberlein1973}. For ergodicity with respect to the Liouville measure, the relevant obstruction is the Liouville measure of the singular set. The geodesic flow is ergodic if and only if $\sing$ has zero Liouville measure~\cite{Burns1983, BurnsMatveev2021}. However, it remains unknown in general whether the singular set of a rank one manifold has measure zero, even for surfaces. Burns-Matveev discuss this problem and explain the gaps in earlier claims of ergodicity~\cite[Section~6.1]{BurnsMatveev2021}; also see~\cite{Wu2015}.

Several geometric hypotheses have been introduced to imply ergodicity. For surfaces, Wu proved that if the set of negative curvature on the manifolds has only finitely many connected components, then the singular set has measure zero~\cite{Wu2015}. Wu, Liu and Wang extended the ergodicity criterion to surfaces without focal points~\cite{WuLiuWang2023}. For real analytic rank one surfaces, the singular set is known to be a finite union of closed flat geodesics, see Knieper~\cite[Section~14]{Hasselblatt2007} and Burns, Climenhaga, Fisher and Thompson~\cite[Section~10.1]{BurnsClimenhagaFisherThompson2018}.

Substantial progress has also been made for other invariant measures. Knieper proved uniqueness of the measure of maximal entropy for closed rank one manifolds~\cite{Knieper1998}. Burns, Climenhaga, Fisher and Thompson proved uniqueness of equilibrium states for suitable potentials whenever the pressure of the singular set is strictly smaller than the full pressure~\cite{BurnsClimenhagaFisherThompson2018}. Call and Thompson obtained the Kolmogorov property under this pressure gap and the Bernoulli property for the measure of maximal entropy~\cite{CallThompson2022}. Recently, Call, Constantine, Erchenko, Sawyer and Work established local product structure for a class of equilibrium states in rank one manifolds~\cite{CallConstantineErchenkoSawyerWork2026}. These developments describe strong statistical properties of the geodesic flow, but do not show the ergodicity with respect to Liouville measure.

In this paper, we study the singular set from a different perspective. Rather than focusing directly on the geometry of the singular set, which can be difficult to control, especially in higher dimensions, we investigate an approach from a function-theoretic perspective and relate the singular set to the infinite-order vanishing of the curvature quantities on the base manifold. In particular, for surfaces, this set is precisely the set of points at which the Gaussian curvature vanishes to infinite order. We prove that the singular fibers, away from the infinite-order vanishing set, are finite, and consequently, the geodesic flow is ergodic when this set has zero volume. We also construct a smooth genus-two surface with an ergodic, non-Anosov geodesic flow and infinitely many connected components of the negative-curvature locus. Finally, for real analytic metrics of any dimensions, we refine our construction of the set of infinite-order vanishing point and prove that the singular set is precisely the set of zeros of a real analytic function. This yields a direct proof of the ergodicity without any additional assumptions.

The paper is organized as follows. Section~\ref{sec2} introduces the notations and states our main results. Section~\ref{sec3} treats smooth surfaces and Section~\ref{sec4} presents the example. Section~\ref{sec5} and~\ref{sec6} address on higher dimensional smooth manifolds and real analytic manifolds, respectively.

\section{Notation and Main Results}\label{sec2}
Let $(M,g)$ be a closed, connected, smooth Riemannian manifold of dimension $n\geq 2$ and nonpositive sectional curvature. We assume $M$ has rank one, as defined below. In particular, when $\dim M=2$, it is a smooth orientable closed surface with genus at least $2$. Whenever we say $M$ is analytic, the Riemannian metric $g$ is understood to be real analytic.

We denote the unit tangent bundle by $T^{1}M$, and $\pi: T^{1}M\to M$ is the standard projection map. Denote the Riemannian curvature tensor by $R$ and the Gaussian curvature function by $K$ on surfaces. For normal unit vectors $\bm{v},\bm{w}$, denote the sectional curvature by $K(\bm{v},\bm{w})=\langle R(\bm{w},\bm{v})\bm{v},\bm{w}\rangle$.

On $T^1M$, let $\mu_{L}$ denote the Liouville measure, normalized to a probability measure. We denote the Riemannian volume on $M$ by $\vol$ and in dimension $2$, by $\area$.

For each $\bm{v}\in T^{1}M$, let $c_{\bm{v}}:\mathbb{R}\to M$ be the unique unit-speed geodesic determined by
\begin{displaymath}
	c_{\bm{v}}(0)=\pi(\bm{v}),\quad c'_{\bm{v}}(0)=\bm{v}.
\end{displaymath}

The geodesic flow is given by
\begin{displaymath}
	\phi_t(\bm{v})=c'_{\bm{v}}(t).
\end{displaymath}

Given a vector $\bm{v}\in T^{1}M$, we define its $\rank(\bm{v})$ by one plus the dimension of the vector space of nonzero parallel normal Jacobi fields, along the geodesic $c_{\bm{v}}$, and the rank of the manifold

\begin{displaymath}
	\rank(M)=\min\left\{\rank(\bm{v})\mid \bm{v}\in T^{1}M\right\}.
\end{displaymath}

Therefore, the rank of a manifold is at least $1$. The manifold is rank one when some geodesic admits no nonzero parallel normal Jacobi field. It is well-known that by Gauss-Bonnet Theorem, a closed surface of genus at least $2$ with nonpositive curvature cannot be flat, thus there is at least one point of strictly negative Gaussian curvature. All tangent vectors on geodesics through this point have rank $1$ (cf.~\cite{Ballmann1985}).

The set of higher-rank vectors (rank at least $2$) is usually called the singular set, while the set of rank $1$ vectors is called the regular set. We denote
\begin{displaymath}
	\sing =\left\{\bm{v}\in T^{1}M \mid \rank(\bm{v})\ge 2\right\},\qquad \reg = T^{1}M\setminus \sing.
\end{displaymath}

The singular set is closed and invariant under the geodesic flow, and the regular set is open and invariant. In particular, on surfaces, the singular set consists precisely of the unit vectors tangent to `flat geodesics':

\begin{displaymath}
	\Lambda =\left\{\bm{v}\in T^{1}M \mid K(c_{\bm{v}}(t))=0,~\forall t\in\mathbb{R}\right\}=\sing.
\end{displaymath}

We recall two classic criteria for hyperbolicity and ergodicity of the geodesic flow on manifolds of nonpositive curvature:

\begin{theorem}\label{thm_2_1}
	The geodesic flow $\phi_t$ is Anosov if and only if $\sing$ is empty (cf.~\cite{Eberlein1973}).
\end{theorem}

\begin{theorem}\label{thm_2_2}
	The geodesic flow $\phi_t$ is ergodic with respect to the Liouville measure $\mu_L$ if and only if $\mu_L(\sing)=0$ (cf.~\cite{Burns1983, BurnsMatveev2021}).
\end{theorem}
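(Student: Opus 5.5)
The plan is to prove the two implications of Theorem~\ref{thm_2_2} separately; the forward direction is essentially formal, while the converse rests on a Hopf-type argument on the regular set.

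\emph{Ergodicity implies $\mu_L(\sing)=0$.} The set $\sing$ is closed and $\phi_t$-invariant, so ergodicity forces $\mu_L(\sing)\in\{0,1\}$. To exclude the value $1$, I use that $M$ has rank one. Then there is a rank one vector, and $\reg$ is open and nonempty. Liouville measure gives positive mass to every nonempty open set, so $\mu_L(\reg)>0$. Hence $\mu_L(\sing)<1$, and therefore $\mu_L(\sing)=0$.

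\emph{$\mu_L(\sing)=0$ implies ergodicity.} I would run the Hopf argument on $\reg$.

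First, for each $\bm v$ I construct the stable and unstable Jacobi tensors as limits of Jacobi tensors vanishing at times $\pm T$. Their horospherical graphs give the stable and unstable foliations $W^s$ and $W^u$ of $T^1M$. These foliations are continuous, but in general not smooth.

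Second, I show that on $\reg$ these are genuinely transverse and contracting, following Ballmann--Brin--Eberlein. For a rank one vector there is no nonzero parallel Jacobi field. From this I deduce that $E^s(\bm v)\cap E^u(\bm v)=0$. I also deduce that along $W^s(\bm v)$ the distance $d(\phi_t\bm v,\phi_t\bm w)$ tends to $0$, at least for $\bm w$ in a neighbourhood that is again regular. To get actual convergence to zero, rather than mere nonincrease of distance, I will use recurrence: by Poincaré recurrence, almost every vector returns near itself infinitely often, and the strict convexity near a rank one vector then forces contraction.

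Third comes the Hopf argument. For a continuous function $f$, Birkhoff's theorem gives forward and backward averages $f^\pm$ that agree almost everywhere. The forward average $f^+$ is constant along stable leaves and $f^-$ is constant along unstable leaves. To pass from "a.e." to local constancy, I need absolute continuity of $W^s$ and $W^u$ on the regular set. I would get this via Pesin theory. On $\reg$ the Lyapunov exponents transverse to the flow are nonzero almost everywhere. Indeed, Pesin's formula and the stable manifold theory on the invariant full-measure set where the Jacobi tensors are nondegenerate show that on a full measure set of recurrent regular vectors the exponents are negative, and the Pesin local manifolds coincide with the horospherical leaves. Pesin's absolute continuity then makes the $f^\pm$ locally constant a.e. near each regular vector.

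Since $\reg$ is open and connected (as in Ballmann's work), and has full measure by hypothesis, a chain argument makes $f^+$ a.e. constant. Ergodicity follows.

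The main obstacle is the third step: proving nonuniform hyperbolicity (nonzero exponents) almost everywhere on $\reg$, rather than merely the absence of parallel fields. My first attempt would be to show that a vanishing exponent along a recurrent orbit produces, in the limit, a parallel Jacobi field along a limiting geodesic. This limit geodesic lies in the closure of the orbit, which is regular for a.e.\ recurrent point, giving a contradiction. A further subtlety is that the connectedness of $\reg$ used in the chain argument must be justified.
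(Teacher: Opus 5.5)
The paper gives no proof of this statement. It quotes it, attributing the nontrivial direction to the Pesin--Hopf argument on $\reg$ (Pesin, Ballmann--Brin, Burns, Burns--Matveev). Your outline follows that same route, and your forward implication is complete. The gap is the step you single out yourself: nonvanishing of the transverse exponents a.e.\ on $\reg$, which is what makes Pesin's absolute continuity available.

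Your first justification does not supply it. Pesin's entropy formula only gives the integral of the \emph{sum} of the positive exponents. Also, in nonpositive curvature the stable Jacobi tensor is nonsingular at every vector, so the ``full-measure set where the Jacobi tensors are nondegenerate'' is all of $T^1M$ and carries no information.

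Your fallback rests on a false claim, namely that the orbit closure of a typical recurrent regular vector is regular. In any situation where the conclusion you are proving holds with $\sing\neq\emptyset$, almost every orbit is dense, since Liouville measure has full support. Such orbits accumulate on $\sing$, so a parallel field along a limit geodesic in the orbit closure contradicts nothing. Moreover, a zero exponent only says that $\log\|J(t)\|$ has vanishing \emph{average} slope. At an arbitrary return time the renormalized field may still be shrinking at a definite rate on a window of fixed length, so recurrence alone does not give a limit field of constant norm.

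The idea can be repaired by taking the limit at $\bm v$ itself and choosing the times quantitatively. Let $\bm v\in\reg$ be Oseledets-regular, with splitting $E^-\oplus E^0\oplus E^+$ of the normal Jacobi fields. Assume also that returns to each neighbourhood of $\bm v$ from a countable base have positive lower density, which holds a.e.\ by Birkhoff.

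If $E^0\neq 0$, then $\dim E^-<n-1$, while the $(n-1)$-dimensional space of stable fields lies in $E^-\oplus E^0$. Hence some stable field $J$ has $\tfrac1t\log\|J(t)\|\to 0$. Writing $J'=U^sJ$ with $U^s\le 0$ bounded, we get $\tfrac{d}{dt}\log\|J\|\in[-C,0]$. So for fixed $S,\varepsilon$, the times $t$ with $\log\|J(t-S)\|-\log\|J(t+S)\|<\varepsilon$ have density one, and therefore they meet the returns to the $1/k$-ball about $\bm v$.

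Choose such $t_k$ with $S=k$ and $\varepsilon=1/k$. The fields $J(t_k+\cdot)/\|J(t_k)\|$ have bounded derivatives, so they subconverge to a normal Jacobi field along $c_{\bm v}$ of constant norm. In nonpositive curvature such a field is parallel, contradicting $\bm v\in\reg$. Alternatively, cite Pesin's theorem that a.e.\ vector of $\reg$ has nonzero transverse exponents and that the ergodic components of $\phi_t|_{\reg}$ are open mod $0$, as the paper implicitly does.

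For the connectedness you flag: each component of $\reg$ is open and $\phi_t$-invariant, because orbits are connected. Topological transitivity of the geodesic flow of a closed rank one manifold (Ballmann) then forces $\reg$ to be connected.
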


Hence, on rank one manifold $M$, the existence of even a single geodesic with higher rank tangent vectors precludes uniform hyperbolicity of the geodesic flow. In particular, on surfaces, a flat geodesic will prevent the geodesic flow to be Anosov. From the measure-theoretic viewpoint, however, isolated flat geodesics are negligible, Liouville ergodicity is obstructed precisely by the presence of a positive Liouville-measure singular set.

To study the ergodicity of geodesic flow on surface of nonpositive curvature and genus at least $2$, rather than studying directly the singular set $\sing \subset T^{1}M$, we focus on the zero-curvature locus on the base surface. More precisely, we investigate to what extent points at which the Gaussian curvature vanish to infinite order can control the fibers, measure, and Hausdorff dimension of the singular set, and hence the ergodic behavior of the geodesic flow.

\begin{definition}[Infinite Flat Set]\label{def_2_1}
	Let $M$ be a closed surface of nonpositive curvature and genus at least $2$, define
	\begin{displaymath}
		F_{\infty}\coloneq\left\{x\in M \mid \nabla^{j}K(x)=0, \forall j\geq 0\right\}.
	\end{displaymath}

	Here $\nabla^{j} K$ is the $j$-th covariant derivative of the Gaussian curvature function $K$ with the convention $\nabla^{0}K=K$.
\end{definition}

Our first result gives a sufficient condition for ergodicity in terms of the area of this set.
\begin{restatable}{theorem}{surface}\label{thm_2_3}
	On a closed, connected, smooth surface of nonpositive curvature and genus at least $2$, if $\area(F_{\infty})=0$, then $\phi_t$ is ergodic with respect to the Liouville measure.
\end{restatable}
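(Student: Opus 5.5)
By Theorem~\ref{thm_2_2}, it suffices to show $\mu_L(\sing)=0$. On a surface $\sing=\Lambda$ is the set of vectors tangent to flat geodesics. The Liouville measure is locally the product of $\area$ on $M$ with arc length on the circle fibers $T^1_xM$. So by Fubini it is enough to prove the following: for every $x\in M\setminus F_\infty$, the fiber $\Lambda_x=\Lambda\cap T^1_xM$ is finite (or merely has zero length). Then
\begin{displaymath}
\mu_L(\sing)\le \int_{F_\infty}\mathrm{length}(\Lambda_x)\,d\area(x)+\int_{M\setminus F_\infty}0\,d\area(x)\le 2\pi\,\area(F_\infty)=0.
\end{displaymath}

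\textbf{Key local claim.} Fix $x\notin F_\infty$. Then some jet of $K$ at $x$ is nonzero; let $m\ge 0$ be the smallest order with $\nabla^mK(x)\neq 0$. If $m=0$, then $K(x)<0$ and $\Lambda_x=\emptyset$. Otherwise, for $\bm v\in T^1_xM$, a flat geodesic through $\bm v$ gives $K(c_{\bm v}(t))\equiv 0$. Differentiating $m$ times at $t=0$ yields
\begin{displaymath}
\frac{d^m}{dt^m}\Big|_{t=0}K(c_{\bm v}(t))=\nabla^mK(x)(\bm v,\dots,\bm v)=0,
\end{displaymath}
because $c_{\bm v}$ is a geodesic, so $\frac{d^j}{dt^j}K(c(t))=\nabla^jK(\dot c,\dots,\dot c)$. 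Lower derivatives of the covariant derivatives vanish at $x$ by minimality of $m$, so no lower-order correction terms appear at that point. Hence every $\bm v\in\Lambda_x$ is a zero of the homogeneous polynomial $P(\bm v)=\nabla^mK(x)(\bm v,\dots,\bm v)$ restricted to the unit circle. Here we need that the first nonzero jet is a symmetric tensor. Indeed, $\nabla^mK(x)$ is symmetric when all lower derivatives vanish at $x$, since antisymmetrized parts involve curvature applied to lower jets, which vanish. So $\nabla^mK(x)\neq 0$ is a nonzero symmetric tensor and its polynomial $P$ is not identically zero. A nonzero homogeneous polynomial of degree $m$ in two variables vanishes on at most $m$ lines, hence at most $2m$ unit vectors. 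Therefore $\Lambda_x$ is finite.

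\textbf{Main obstacle.} The main point requiring care is the symmetry claim that $\nabla^mK(x)$ is symmetric at the first nonvanishing order. I would prove it by induction using the Ricci identity: $\nabla^{j}K$ differs from its symmetrization by contractions of $R$ with $\nabla^{i}K$ for $i<j$, all of which vanish at $x$. Alternatively, I would avoid this point by working in normal coordinates at $x$. The Taylor polynomial of $K\circ\exp_x$ has its first nonzero homogeneous part $Q$ of degree $m$, and since radial lines are geodesics, $\frac{d^m}{dt^m}K(\exp_x(t\bm v))|_{0}=m!\,Q(\bm v)$. This gives the finiteness directly.

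Measurability of $\Lambda$ is automatic, since it is closed. The Fubini step then completes the proof, because $F_\infty$ is closed and has zero area by hypothesis.
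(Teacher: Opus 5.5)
Your proof is correct and follows essentially the paper's route. The core is finiteness of $\Lambda_x$ for $x\notin F_\infty$ via the first nonvanishing jet of $K$ along geodesics, which is the paper's Lemma~\ref{lem_3_2}; the paper settles the symmetry issue you flag by identifying $\nabla^mK(x)$ with the symmetric array of coordinate partials $\partial^mK(x)$. This is followed by Fubini over the circle fibers. The one difference is that the paper goes through Proposition~\ref{prop_3_1} (flow-invariance and closedness of $F_\infty$) to place whole flat geodesics inside $F_\infty$, whereas you apply Fubini directly to the base points, which is all the theorem needs.
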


For a real analytic metric, the Gaussian curvature is also real analytic. The preceding theorem therefore gives the following corollary.

\begin{restatable}{corollary}{analysurface}\label{cor_2_1}
	On a closed, connected, real analytic surface of nonpositive curvature and genus at least $2$, the geodesic flow $\phi_t$ is ergodic with respect to the Liouville measure.
\end{restatable}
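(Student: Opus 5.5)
The plan is to reduce Corollary~\ref{cor_2_1} to Theorem~\ref{thm_2_3} by showing that $\area(F_{\infty})=0$ whenever the metric is real analytic. Then Theorem~\ref{thm_2_3}, which rests on Theorem~\ref{thm_2_2}, gives ergodicity with respect to $\mu_L$.

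First, I would check that $K$ is real analytic on $M$. In analytic coordinates the metric coefficients $g_{ij}$ are real analytic. The Christoffel symbols and the curvature tensor are built from the $g_{ij}$ by derivatives, products, and division by $\det(g_{ij})>0$, so they are analytic as well. Hence $K=R_{1212}/\det(g_{ij})$ is analytic in every analytic chart.

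Second, I would show that $F_\infty$ is either empty or all of $M$. If $x\in F_\infty$, then all covariant derivatives of $K$ vanish at $x$. By induction on the order, the covariant derivatives $\nabla^jK$ for $j\le m$ vanishing at $x$ is equivalent to all partial derivatives of $K$ of order $\le m$ vanishing at $x$ in a chart. This holds because $\nabla^jK$ equals $\partial^jK$ plus lower-order partials multiplied by Christoffel terms. So the Taylor series of $K$ at $x$ is zero, and $K\equiv0$ on a neighborhood of $x$. Consequently $F_\infty$ is open. It is also closed, being an intersection of zero sets of continuous tensors. Since $M$ is connected, $F_\infty=\emptyset$ or $F_\infty=M$.

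Third, I would rule out $F_\infty=M$. That case would mean $K\equiv 0$. The Gauss--Bonnet theorem then gives $\int_M K\,d\area=2\pi\chi(M)=2\pi(2-2g)<0$ for genus $g\ge2$, which is a contradiction. Therefore $F_\infty=\emptyset$, so $\area(F_\infty)=0$, and Theorem~\ref{thm_2_3} applies.

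There is no genuine obstacle here, since all the analytic content sits in Theorem~\ref{thm_2_3}. The only step needing care is the translation between vanishing of covariant derivatives and vanishing of partial derivatives.
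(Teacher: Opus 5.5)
Your proposal is correct and follows the paper's own route: if $F_\infty\neq\emptyset$, real analyticity of $K$ and connectedness of $M$ force $K\equiv 0$, which contradicts Gauss--Bonnet; hence $F_\infty=\emptyset$ and Theorem~\ref{thm_2_3} applies. The only difference is presentation: your open--closed argument (with the covariant-versus-partial derivative translation) spells out the analytic continuation step that the paper invokes in one line.
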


This corollary is already known. For a rank one surface with a real analytic metric, the singular set is a finite union of periodic orbits (cf.~\cites[Section~10.1]{BurnsClimenhagaFisherThompson2018}[Section~14]{Hasselblatt2007}), which has zero Liouville measure, leading to the ergodicity. Alternatively, this result can also be deduced from~\cite[Theorems~1.3 and~1.6]{Wu2015}. Our purpose here is to give a different approach, and for this reason we record the result in this form.

In higher dimensions, the surface argument does not extend simply by replacing the Gaussian curvature $K$ with the curvature tensor $R$. The reason is that a singular vector requires only the existence of one nonzero parallel normal Jacobi field along its geodesic; it does not require the full curvature tensor to vanish along that geodesic. We therefore introduce an analogue of $F_{\infty}$ based on the Jacobi operator, denoted by $F_{\infty}^{J}$ and defined in Section~\ref{sec5}. We achieve an analogous result in higher dimensional manifolds:

\begin{restatable}{theorem}{manifold}\label{thm_2_4}
	On a closed, connected, smooth, rank one manifold $M$ of nonpositive sectional curvature, let $\mu_L$ be the normalized Liouville measure. Then
	\begin{displaymath}
		\mu_{L}(\sing)\leq \dfrac{\vol(F^{J}_{\infty})}{\vol(M)}.
	\end{displaymath}

	In particular, if $\vol(F^{J}_{\infty})=0$, the geodesic flow is ergodic with respect to the Liouville measure.
\end{restatable}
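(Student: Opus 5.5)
The definition of $F^{J}_{\infty}$ is deferred to Section~\ref{sec5}, so I will fix the natural candidate and check that the argument closes with it. For $x\in M$ and $\bm{v}\in T^{1}_xM$, let $R_{\bm{v}}=R(\cdot,\bm{v})\bm{v}$ be the Jacobi operator on $\bm{v}^{\perp}$; it is positive semidefinite since the curvature is nonpositive. Let $D(\bm{v})=\det\bigl(-R_{\bm{v}}|_{\bm{v}^\perp}\bigr)$, or a suitable reduced version, and define the fiberwise second moment
\begin{displaymath}
m(x)=\int_{T^1_xM} D(\bm{v})^2\,d\bm{v}.
\end{displaymath}
This is a smooth nonnegative function on $M$. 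I take $F^{J}_{\infty}$ to be the set of points where $m$ vanishes to infinite order. The aim is to show that for $x\notin F^{J}_{\infty}$ the singular fiber $\sing\cap T^1_xM$ has zero measure in the sphere $T^1_xM$. Fubini for $\mu_L$, which is locally $\vol\times$(normalized spherical measure), then gives
\begin{displaymath}
\mu_L(\sing)\le \frac{1}{\vol(M)}\int_M \frac{|\sing\cap T^1_xM|}{|T^1_xM|}\,d\vol(x)\le\frac{\vol(F^J_\infty)}{\vol(M)}.
\end{displaymath}
Theorem~\ref{thm_2_2} then yields ergodicity when $\vol(F^J_\infty)=0$.

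The key step is a pointwise obstruction. Suppose $\bm{v}\in\sing$, with a parallel normal Jacobi field $E(t)$ along $c_{\bm{v}}$. Then $R(E,\dot c)\dot c\equiv 0$. Since $R_{\dot c}$ is semidefinite, $\langle R_{\dot c}E,E\rangle=0$ forces $R_{\dot c(t)}E(t)=0$. Hence $D(\phi_t\bm{v})=0$ for all $t$. This exhibits the singular fiber over $x$ as the set of directions $\bm{v}$ with $D(\phi_t\bm{v})=0$ for all $t$.

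Taylor-expand in $t$ at $t=0$. The $k$-th derivative $\frac{d^k}{dt^k}D(\phi_t\bm v)|_{t=0}$ is a polynomial in $\bm{v}$, of degree growing with $k$, whose coefficients are built from $\nabla^jR(x)$ for $j\le k$. Equivalently, in normal coordinates at $x$, $D(\phi_t\bm{v})=D_{c_{\bm v}(t)}(\dot c)$ is the restriction of the smooth function $(y,\bm w)\mapsto D(\bm w)$ along the geodesic. Polar coordinates about $x$ then show the following: if the $k$-jet of $m$ at $x$ is nonzero, then some radial derivative $\partial_t^j D(\phi_t\bm v)|_{t=0}$ with $j\le k$ is a nonzero polynomial in $\bm v$ restricted to the sphere. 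The zero set of a nonzero polynomial on the sphere, which is real analytic, has spherical measure zero. So the set of $\bm v$ with all $t$-derivatives vanishing is null, and the singular fiber over $x$ is null.

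The main obstacle is linking "$m$ does not vanish to infinite order at $x$" with "some $t$-derivative of $D\circ\phi_t$ at $t=0$ is a nonzero function of $\bm v$". The jet of $m$ at $x$ involves spatial derivatives in all directions, whereas the geodesic only moves radially. My first attempt is to work in normal coordinates and compare two things: the Taylor coefficients of $y\mapsto \int D_y^2$, and the radial derivatives of $t\mapsto D(\phi_t\bm v)$, using that the first nonzero homogeneous Taylor term of $m$ is averaged over directions. This may instead require defining $F^J_\infty$ directly as the set where all $\partial_t^jD(\phi_t\bm v)|_{t=0}$ vanish identically in $\bm v$. I expect Section~\ref{sec5} to make that precise, and the rest of the plan is unchanged under that definition.
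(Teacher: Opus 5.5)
There is a genuine gap, and it is at the step you flagged. Your plan needs the pointwise statement that $\sing\cap T^1_xM$ is $\sigma_x$-null for \emph{every} $x\notin F^J_\infty$. At points with $m(x)=0$ you try to get this from the Taylor coefficients $\partial_t^j D(\phi_t\bm v)|_{t=0}$. The implication ``nonzero $k$-jet of $m$ at $x$ $\Rightarrow$ some radial derivative is a nonzero polynomial in $\bm v$'' is asserted but not justified, and it is doubtful.

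Those radial derivatives only see your $D$ on the ``diagonal'' $\bigl(\exp_x(t\bm v),\ \text{radial direction}\bigr)$. The jet of $m(y)=\int D(y,\bm w)^2\,d\sigma_y(\bm w)$, by contrast, sees $D(y,\bm w)$ for all directions $\bm w$. Infinite-order vanishing along the diagonal does not control the $y$-jet of $D(\cdot,\bm w)$ for non-radial $\bm w$. In dimension two the issue disappears because $q_J=-K$ does not depend on $\bm v$, which is why Lemma~\ref{lem_3_2} works there. Your fallback of redefining $F^J_\infty$ as the set where all $\partial_t^jD(\phi_t\bm v)|_{t=0}$ vanish identically would change the statement. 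In the theorem, $F^J_\infty$ is the infinite-order vanishing set of $h_J$, which is exactly your $m$.

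The missing idea is that no pointwise statement is needed on $\{m=0\}\setminus F^J_\infty$, because that set has volume zero. At such $x$, take a lowest-order nonvanishing partial derivative $\partial^\alpha m(x)\neq 0$ in a chart and write $\alpha=\beta+e_i$. Then $x\in\{\partial^\beta m=0,\ d(\partial^\beta m)\neq 0\}$. By the implicit function theorem this set is covered by countably many embedded hypersurfaces, so $\vol(\{m=0\})=\vol(F^J_\infty)$.

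On $\{m>0\}$ you need no Taylor expansion along geodesics at all. Singular $\bm v$ already satisfy $D(\bm v)=0$ at $t=0$, by Lemma~\ref{lem_5_4}. Moreover $D(x,\cdot)$ is a nonzero homogeneous polynomial of degree $2(n-1)$ on $T_xM$, so its zero set on the unit sphere is null. Fubini then gives $\mu_L(\sing)\le \vol(\{m=0\})/\vol(M)=\vol(F^J_\infty)/\vol(M)$. This is exactly the paper's argument (Lemmas~\ref{lem_5_2}--\ref{lem_5_4}).
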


Finally, in the real analytic setting, a characterization of the singular set in terms of curvature operators yields the following result:

\begin{restatable}{theorem}{analymlfd}\label{thm_2_5}
	Let $M$ be a closed, connected, real analytic, rank one manifold of nonpositive sectional curvature. Then its geodesic flow $\phi_t$ is ergodic with respect to the Liouville measure.
\end{restatable}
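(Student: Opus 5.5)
By Theorem~\ref{thm_2_2}, it suffices to show $\mu_L(\sing)=0$. The plan is to exhibit a nontrivial real analytic function on $T^1M$ whose zero set contains $\sing$. A nontrivial real analytic function on a connected analytic manifold vanishes only on a set of measure zero, so this finishes the proof.

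\textbf{Step 1: a determinant built from curvature operators.} For $\bm{v}\in T^1M$, let $R_{\bm{v}}=R(\cdot,\bm{v})\bm{v}$ be the Jacobi operator acting on $\bm{v}^{\perp}$, which is symmetric and negative semidefinite. Let $J$ be a parallel normal Jacobi field along $c_{\bm{v}}$. Then $J''=0$, so $R(J,c')c'=0$ along the geodesic. Differentiate this repeatedly along the geodesic, using that $J$ is parallel. This gives $(\nabla_{c'}^{k}R)(J,c')c'=0$ for all $k\ge 0$. At $t=0$, these are operators $R^{(k)}_{\bm{v}}$ on $\bm{v}^\perp$, each analytic in $\bm{v}$, and $J(0)$ lies in the common kernel of all of them. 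Nonpositivity gives a convenient packaging. Set
\[
Q_N(\bm{v})=\sum_{k=0}^{N}\bigl(R^{(k)}_{\bm{v}}\bigr)^{*}R^{(k)}_{\bm{v}},
\]
which is positive semidefinite on $\bm{v}^\perp$. Then $\ker Q_N(\bm{v})=\bigcap_{k\le N}\ker R^{(k)}_{\bm{v}}$. Let $D_N(\bm{v})=\det Q_N(\bm{v})$, computed in a local orthonormal frame of $\bm{v}^\perp$. This is well defined globally because the determinant is frame-independent, and it is real analytic on $T^1M$. We have $\sing\subset\{D_N=0\}$ for every $N$.

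\textbf{Step 2: nontriviality.} Here we need $D_N\not\equiv 0$ for some $N$. The kernels $\bigcap_{k\le N}\ker R^{(k)}_{\bm{v}}$ decrease in $N$ and stabilize, by dimension, at some $N(\bm{v})$. We claim the converse of Step 1 holds: if $w\in\bigcap_{k\ge0}\ker R^{(k)}_{\bm{v}}$, then the parallel transport $W(t)$ of $w$ satisfies $R(W,c')c'\equiv0$. Indeed, $t\mapsto\langle R(W(t),c'(t))c'(t),E(t)\rangle$ is analytic in $t$ for any parallel field $E$, and all its derivatives at $0$ vanish, so it is identically zero. Hence $W$ is a parallel Jacobi field.

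Now take a regular vector $\bm{v}_0$, which exists since $M$ has rank one. Some finite $N$ already gives a trivial kernel at $\bm{v}_0$, so $D_N(\bm{v}_0)\neq0$. Since $T^1M$ is connected, $D_N$ is a nontrivial analytic function. Its zero set is then a proper analytic subset, of Liouville measure zero. (By taking $N$ uniform on a neighborhood, one even gets $\sing=\{D_N=0\}$ locally, but only the inclusion is needed.)

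\textbf{Main obstacle.} The delicate point is the analyticity and global definition of $D_N$, together with the interchange between derivatives along the geodesic and derivatives of the tensor evaluated at $(\pi(\bm{v}),\bm{v})$. I would handle this by expressing $R^{(k)}_{\bm{v}}=(\nabla^kR)(\cdot,\bm{v},\bm{v},\bm{v},\ldots,\bm{v})$, which is polynomial in $\bm{v}$ with analytic coefficients, since $c'$ is parallel. This makes analyticity of $D_N$ immediate.
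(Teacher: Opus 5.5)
Your proof is correct. It has the same architecture as the paper's proof:
\begin{itemize}
\item a positive semidefinite operator on $\bm v^{\perp}$ whose kernel contains $Y(0)$ for every parallel normal Jacobi field $Y$;
\item its determinant, a real analytic function on $T^1M$;
\item nonvanishing of that determinant at a rank one vector;
\item the fact that a nontrivial analytic function has a null zero set.
\end{itemize}

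The difference is the operator. The paper uses $Q(\bm v)=\int_{-1}^{1}A(\bm v,t)^{*}A(\bm v,t)\,\mathrm{d}t$, where $A(\bm v,t)$ is the Jacobi operator at time $t$ pulled back to $E_{\bm v}$ by parallel transport. It then uses analyticity in $t$ to pass from vanishing on $(-1,1)$ to vanishing on $\mathbb{R}$. Your operators $R^{(k)}_{\bm v}$ are exactly the derivatives $\partial_t^{k}A(\bm v,t)|_{t=0}$. So you encode the same analytic curve $t\mapsto A(\bm v,t)$ by its jet at $t=0$ rather than by its values on an interval.

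The paper's choice gives exactness with a single function: $\sing=D^{-1}(0)$ and $\rank(\bm v)=1+\dim\ker Q(\bm v)$ for every $\bm v$, with no index to choose. The cost is that analyticity of $D$ needs joint analyticity of $A(\bm v,t)$ in $(\bm v,t)$, which rests on analytic dependence of geodesics and parallel transport on initial data, plus analyticity of a parameter-dependent integral.

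Your choice makes $D_N(x,\cdot)$ a polynomial in $\bm v$ whose coefficients are built from $\nabla^{k}R$ at $x$. Analyticity is then immediate and purely pointwise. Also, $D_0=q_J^{2}$, so your family directly refines the function $q_J$ of Section 5. The cost is that $N$ must be chosen by stabilizing the kernels at one regular vector, and you get only $\sing\subset D_N^{-1}(0)$. That inclusion is all the theorem needs.

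Two minor points:
\begin{itemize}
\item Positive semidefiniteness of $Q_N$ does not use nonpositive curvature; any sum of operators $T^{*}T$ is positive semidefinite.
\item Your parenthetical claim that $N$ can be taken locally uniform, giving $\sing=\{D_N=0\}$ locally, is not justified by what you wrote. It would need a local stationarity (Noetherian-type) argument for the decreasing sequence of analytic sets $\{D_N=0\}$. You do not use this claim, so the proof is unaffected.
\end{itemize}
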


\section{Ergodicity on Smooth Surfaces}\label{sec3}
Throughout this section, we always assume $M$ is a closed surface of nonpositive curvature and genus at least $2$.

\begin{lemma}\label{lem_3_1}
	The set $F_{\infty}=\left\{x\in M\mid \nabla^{j}K(x)=0,~j\geq 0\right\}$ is a closed set.
\end{lemma}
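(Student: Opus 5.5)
The plan is to write $F_{\infty}$ as an intersection of closed sets:
\begin{displaymath}
	F_{\infty}=\bigcap_{j\geq 0} Z_j,\qquad Z_j\coloneq\left\{x\in M \mid \nabla^{j}K(x)=0\right\}.
\end{displaymath}
An arbitrary intersection of closed sets is closed, so it suffices to show that each $Z_j$ is closed in $M$.

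Since the metric $g$ is smooth, the Gaussian curvature $K$ is a smooth function on $M$. Its $j$-th covariant derivative $\nabla^{j}K$ is therefore a smooth section of the tensor bundle $(T^{*}M)^{\otimes j}$; for $j=0$ it is simply $K$. In particular $\nabla^{j}K$ is continuous. The Riemannian metric induces a fiber norm $|\cdot|_g$ on $(T^{*}M)^{\otimes j}$, and the function
\begin{displaymath}
	f_j(x)=|\nabla^{j}K(x)|_g
\end{displaymath}
is continuous on $M$, being a composition of a continuous section with the continuous fiberwise norm. Then $Z_j=f_j^{-1}(\{0\})$ is the preimage of a closed set under a continuous map, hence closed.

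If one prefers to avoid the bundle norm, the same conclusion can be reached locally. In a coordinate chart, $\nabla^{j}K$ vanishes at $x$ if and only if all of its finitely many component functions vanish at $x$. Each component is a continuous function built from derivatives of $K$ and the Christoffel symbols, so $Z_j$ is closed within each chart, and therefore closed in $M$.

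There is no real obstacle in this argument. The only point needing care is that ``$\nabla^{j}K(x)=0$'' is a coordinate-independent condition on a continuous tensor field, which the norm formulation handles cleanly. As a remark, since $M$ is compact, $F_{\infty}$ is in fact also compact.
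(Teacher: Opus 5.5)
Your proof is correct and is the same argument the paper gives: $F_{\infty}$ is the intersection over $j\geq 0$ of the zero sets of the continuous tensor fields $\nabla^{j}K$, and each of these zero sets is closed. Your use of the fiber norm $|\nabla^{j}K|_g$ (or of local components) simply spells out why the zero set of a continuous section is closed, a step the paper leaves implicit.
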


\begin{proof}
	For every $j\ge0$, the function $x\mapsto \nabla^{j}K(x)$ is continuous. Hence,
	\begin{displaymath}
		F_{\infty}=\bigcap_{j=0}^{\infty}\{x\in M:\nabla^{j}K(x)=0\}
	\end{displaymath}
	is closed.
\end{proof}

\begin{lemma}\label{lem_3_2}
	For each $x\in M$, let
	\begin{displaymath}
		\Lambda_{x} = \left\{\bm{v}\in T^{1}_{x}M \mid K(c_{\bm{v}}(t))=0,~\forall t\in\mathbb{R}\right\}.
	\end{displaymath}

	If $x\notin F_{\infty}$, then $\Lambda_x$ is a finite set, thus has zero angular measure on $T_x^1M\simeq S^1$.
\end{lemma}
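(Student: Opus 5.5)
We argue by contradiction: if $\Lambda_x$ were infinite, the infinite-order Taylor data of $K$ at $x$ would be forced to vanish. Since $x\notin F_\infty$, there is a smallest $m\ge 0$ with $\nabla^{m}K(x)\neq 0$. If $m=0$ then $K(x)<0$, so no geodesic through $x$ is flat and $\Lambda_x=\emptyset$. So assume $m\ge1$, i.e.\ $\nabla^{j}K(x)=0$ for $j<m$ and $\nabla^mK(x)\neq0$.

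The key step is to read off Taylor coefficients of $K$ along each radial geodesic. For $\bm v\in\Lambda_x$ we have $K(c_{\bm v}(t))\equiv0$, so every $t$-derivative at $t=0$ vanishes. Because $c_{\bm v}$ is a geodesic, $\frac{d^j}{dt^j}K(c_{\bm v}(t))|_{t=0}=\nabla^jK(x)(\bm v,\dots,\bm v)$ (the symmetrized part). Equivalently, in normal coordinates $\exp_x$, the $j$-th order homogeneous Taylor part of $K\circ\exp_x$ at $0$ evaluated at $\bm v$ is this quantity. I would use normal coordinates to avoid symmetrization issues: let $f=K\circ\exp_x$ near $0\in T_xM\cong\mathbb R^2$. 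Then $f(t\bm v)=0$ for all small $t$ and all $\bm v\in\Lambda_x$. Moreover, the vanishing of $\nabla^jK(x)$ for $j<m$ together with $\nabla^mK(x)\ne0$ means that the lowest nonzero homogeneous Taylor polynomial of $f$ at $0$ has degree exactly $m$; call it $P_m$. Indeed, the covariant derivatives at $x$ differ from the coordinate derivatives only by lower-order terms involving Christoffel symbols contracted with lower derivatives, which vanish by induction. Hence $P_m$ is a nonzero homogeneous polynomial of degree $m$ (it equals $\frac1{m!}\nabla^mK(x)$ up to symmetrization, and the symmetric part of $\nabla^m K$ is what the Taylor coefficient sees).

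Then for $\bm v\in\Lambda_x$, $0=f(t\bm v)=t^mP_m(\bm v)+O(t^{m+1})$, so $P_m(\bm v)=0$. A nonzero homogeneous polynomial of degree $m$ in two variables vanishes on the unit circle at no more than $2m$ points (it factors into at most $m$ linear/irreducible factors over $\mathbb R$, each vanishing on at most one line). Hence $|\Lambda_x|\le 2m<\infty$, and a finite set has zero angular measure.

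The main obstacle is the middle step: justifying that the degree-$m$ Taylor part $P_m$ is nonzero, i.e.\ that $\nabla^mK(x)\ne0$ forces its \emph{symmetric} part to be nonzero. The non-symmetric part of $\nabla^mK$ is expressible, via the Ricci identities, through curvature times lower covariant derivatives of $K$, which vanish at $x$ by minimality of $m$. So $\nabla^mK(x)$ is already symmetric. I would prove this by induction on $m$, which handles both the symmetrization and the coordinate-versus-covariant comparison at once.
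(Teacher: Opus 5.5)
Your proposal is correct and follows essentially the paper's route: the paper likewise identifies $\frac{d^m}{dt^m}K(c_{\bm v}(t))|_{t=0}$ with $\nabla^mK(x)(\bm v,\dots,\bm v)$, and shows by induction that at $x$ the components of $\nabla^mK$ equal the coordinate partials (which, as you observe, also settles symmetry). It then concludes that $\Lambda_x$ lies in the zero set on the circle of a nonzero homogeneous polynomial of degree $m$. Your Taylor expansion of $K\circ\exp_x$, your explicit bound $|\Lambda_x|\le 2m$ via factorization, and your use of $m\ge 1$ rather than the paper's $m\ge 2$ are only cosmetic variations.
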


\begin{proof}
	Take any point $x\notin F_{\infty}$. If $K(x)<0$, $\Lambda_{x}=\emptyset$ by definition. When $K(x)=0$, $0$ is a local maxima, $\nabla K(x)=0$. Since $x\notin F_{\infty}$, we can find the minimal $m\ge 2$, such that $\nabla^{m}K(x)\neq 0$.

	By definition, for any $\bm{v}\in \Lambda_{x}$, $K(c_{\bm{v}}(t))=0$ for any $t\in\mathbb{R}$. Thus,
	\begin{displaymath}
		\dfrac{\mathrm{d}^n}{\mathrm{d}t^{n}} K(c_{\bm{v}}(t))=0,\quad\forall n\in\mathbb{N}.
	\end{displaymath}

	In particular, $\dfrac{\mathrm{d}^{m}}{\mathrm{d}t^{m}} K(c_{\bm{v}}(t))=0$.

	Now claim that
	\begin{displaymath}
		\nabla^{n}K(x)(\overbrace{\bm{v},\ldots,\bm{v}}^{n})=\left.\dfrac{\mathrm{d}^{n}}{\mathrm{d}t^{n}}\right\rvert_{t=0}K(c_{\bm{v}}(t)).
	\end{displaymath}

	Prove this claim by induction. When $n=1$, by chain rule
	\begin{displaymath}
		\left.\dfrac{\mathrm{d}}{\mathrm{d}t}\right\rvert_{t=0}K(c_{\bm{v}}(t))=\langle \nabla K(c_{\bm{v}}(t)),c'_{\bm{v}}(t)\rangle\rvert_{t=0}=\langle \nabla K(x),\bm{v}\rangle=\nabla K(x)(\bm{v}).
	\end{displaymath}

	Suppose the claim is true for $n=k$, then we can compute that
	\begin{displaymath}
		\begin{aligned}
			\dfrac{\mathrm{d}^{k+1}}{\mathrm{d}t^{k+1}}K(c_{\bm{v}}(t)) & = \dfrac{\mathrm{d}}{\mathrm{d}t}(\nabla^{k} K(c_{\bm{v}}))(t)(\overbrace{c'_{\bm{v}}(t),\ldots,c'_{\bm{v}}(t)}^{k})                         \\
			                                                            & =(\nabla_{c'_{\bm{v}}(t)}\nabla^{k}K(c_{\bm{v}}(t)))(\overbrace{c'_{\bm{v}}(t),\ldots,c'_{\bm{v}}(t)}^{k})                                   \\
			                                                            & +\sum_{j=1}^k \nabla^{k}K(c_{\bm{v}}(t))(c'_{\bm{v}}(t),\ldots,\overbrace{\nabla_{c'_{\bm{v}}(t)}c'_{\bm{v}}(t)}^{j},\ldots, c'_{\bm{v}}(t)) \\
		\end{aligned}
	\end{displaymath}

	Given $c_{\bm{v}}(t)$ is a geodesic, $\nabla_{c'_{\bm{v}}(t)}c'_{\bm{v}}(t)=0$, terms in the summation all vanish. Therefore, we obtain the required formula:
	\begin{displaymath}
		\begin{aligned}
			\left.\dfrac{\mathrm{d}^{k+1}}{\mathrm{d}t^{k+1}}\right\rvert_{t=0}K(c_{\bm{v}}(t)) & =\nabla_{c'_{\bm{v}}(t)}\nabla^{k}K(c_{\bm{v}}(t))(\overbrace{c'_{\bm{v}}(t),\ldots,c'_{\bm{v}}(t)}^{k})\rvert_{t=0} \\
			                                                                                    & =\nabla^{k+1}K(c_{\bm{v}}(t))(\overbrace{c'_{\bm{v}}(t),\ldots,c'_{\bm{v}}(t)}^{k+1})\rvert_{t=0}                    \\
			                                                                                    & =\nabla^{k+1} K(x)(\overbrace{\bm{v},\ldots,\bm{v}}^{k+1}).
		\end{aligned}
	\end{displaymath}

	Now let $P_{m,x}(\bm{u})\coloneq(\nabla^{m}K(x))(\bm{u},\ldots,\bm{u})$, we can compute the component of $P_{m,x}$ in local coordinate $(x_1,x_2)$ near the point $x$. Start from the first order, we have
	\begin{displaymath}
		\nabla K(x)=\dfrac{\partial K(x)}{\partial x_{1}}\mathrm{d}x_{1} + \dfrac{\partial K(x)}{\partial x_{2}}\mathrm{d}x_2=0.
	\end{displaymath}

	Now the second order covariant derivative $\nabla^{2}K(x)$ is given by
	\begin{displaymath}
		\nabla_{j}\nabla_{i}K(x)=\dfrac{\partial^2 K(x)}{\partial x_{j}\partial x_{i}}-\Gamma^{k}_{ij}\dfrac{\partial K}{\partial x_{k}}=\dfrac{\partial^2 K(x)}{\partial x_{j}\partial x_{i}}.
	\end{displaymath}

	By induction on $l<m$, we obtain that,
	\begin{displaymath}
		0={(\nabla^{l} K(x))}_{i_{1}\cdots i_{l}}=\dfrac{\partial^{l} K(x)}{\partial x_{i_1}\cdots\partial x_{i_l}},\qquad i_1,\ldots,i_l=1,2,
	\end{displaymath}
	provided that all lower order derivatives vanish at $x$. In particular, when $l=m$
	\begin{displaymath}
		{\left(\nabla^{m} K(x)\right)}_{i_{1} \cdots i_{m}}=\dfrac{\partial^{m} K(x)}{\partial x_{i_{1}}\cdots\partial x_{i_{m}}}+\sum_{0\leq l\leq m-1} A\cdot B,\qquad i_1,\ldots,i_m=1,2.
	\end{displaymath}

	In the summation, $A$ stands for lower orders of partial derivatives of $K(x)$, which all vanish at $x$, and $B$ stands for the Christoffel symbols and their derivatives. Therefore, we get
	\begin{equation}\label{eq_3_1}
		{(\nabla^{m} K(x))}_{i_1\cdots i_m}=\dfrac{\partial^m K(x)}{\partial x_{i_1}\cdots\partial x_{i_m}}.
	\end{equation}

	Choose $\bm{u}\in T^{1}_x M$. In the local geodesic normal coordinate with the orthonormal frame $\{\bm{e}_{1}(y),\bm{e}_{2}(y)\}$ for $y$ near $x$, set $\bm{u}=u_{1}\,\bm{e}_{1}(x)+u_{2}\,\bm{e}_{2}(x)=u_{1}\,\partial_{x_1}+u_{2}\,\partial_{x_2}$, we can write
	\begin{equation}\label{eq_3_2}
		P_{m,x}(\bm{u})=\nabla^{m} K(x)(\overbrace{\bm{u},\ldots,\bm{u}}^{m})=\sum_{i_{1},\ldots,i_{m}=1}^{2}\dfrac{\partial^{m}K(x)}{\partial x_{i_1}\cdots\partial x_{i_m}}u_{i_1}\ldots u_{i_m}.
	\end{equation}

	Equation~\ref{eq_3_2} shows that $P_{m,x}(\bm{u})$ is a nonzero homogeneous polynomial of degree $m$ in $u_1$ and $u_2$.

	Now, for any $\bm{v}\in \Lambda_x$, by definition, the geodesic $c_{\bm{v}}$ is flat, hence
	\begin{displaymath}
		P_{m,x}(\bm{v})=\dfrac{\mathrm{d}^{m}}{\mathrm{d}t^{m}}K(C_{\bm{v}}(t))=0.
	\end{displaymath}

	On $T^1_x M\simeq S^1$, let $\bm{v}=\cos\theta\bm{e}_1(x)+\sin\theta\bm{e}_2(x)$. By homogeneity, the restriction of $P_{m,x}(\bm{v})$ to the unit circle now becomes a trigonometric polynomial of $\theta$ of degree $m$:

	\begin{equation}\label{eq_3_3}
		P_{m,x}(\bm{v}(x,\theta))=\sum_{i=0}^{m} a_{i}(x)\cos^{m-i}\theta\sin^{i}\theta,
	\end{equation}
	where $a_i(x)$ are smooth functions defined near $x$. For fixed $x$, there are only finitely many $\theta\in S^{1}$, such that $P_{m,x}=0$. Thus, $\Lambda_x$ is a finite set with $0$ angular measure.
\end{proof}

We now relate the projected singular set $\pi(\Lambda)\subset M$ to the infinite flat set $F_{\infty}$. $\Lambda$ consists of tangent vectors whose geodesics are flat, every point in $\pi(\Lambda)$ has zero Gaussian curvature. By contrast, $F_{\infty}$ consists of points at which $K$ vanishes to infinite order. Both $\pi(\Lambda)$ and $F_{\infty}$ are contained in the zero-curvature locus, but neither inclusion between $\pi(\Lambda)$ and $F_{\infty}$ is automatic. The following
proposition describes their relationship.

\begin{proposition}\label{prop_3_1}
	For $\mu_{L}$-almost every $\bm{v}\in\Lambda$, $c_{\bm{v}}(\mathbb{R})\subset F_{\infty}$.
\end{proposition}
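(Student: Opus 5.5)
The plan is to combine Lemma~\ref{lem_3_2} with Fubini and the flow invariance of $\Lambda$ and $\mu_L$. Set
\begin{displaymath}
	B=\left\{\bm{v}\in\Lambda \mid c_{\bm{v}}(\mathbb{R})\not\subset F_{\infty}\right\}=\bigcup_{t\in\mathbb{R}}\phi_{-t}\bigl(\Lambda\cap\pi^{-1}(M\setminus F_{\infty})\bigr),
\end{displaymath}
and show $\mu_L(B)=0$. The first step is to check that $B$ is a measurable, $\phi_t$-invariant subset of $\Lambda$. The set $\Lambda=\sing$ is closed and invariant, and $M\setminus F_\infty$ is open by Lemma~\ref{lem_3_1}. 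Hence $\Lambda_0\coloneq\Lambda\cap\pi^{-1}(M\setminus F_\infty)$ is relatively open in $\Lambda$, and $B$ is a union of flow images of it, so $B$ is Borel (indeed relatively open in $\Lambda$).

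The second step is to show $\mu_L(\Lambda_0)=0$. The Liouville measure locally disintegrates as $\mathrm{d}\mu_L=c\,\mathrm{d}\area(x)\,\mathrm{d}\theta$, with $\theta$ the angle on the fiber $T^1_xM\simeq S^1$. By Fubini,
\begin{displaymath}
	\mu_L(\Lambda_0)=c\int_{M\setminus F_\infty}\mathrm{Leb}_{S^1}(\Lambda_x)\,\mathrm{d}\area(x),
\end{displaymath}
and each integrand vanishes because $\Lambda_x$ is finite by Lemma~\ref{lem_3_2}. Hence $\mu_L(\Lambda_0)=0$.

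The third step passes from $\Lambda_0$ to $B$. Write $B=\bigcup_{q\in\mathbb{Q}}\phi_{-q}(\Lambda_0)$. To justify this, note that if $\bm{v}\in B$, then $c_{\bm{v}}(t_0)\notin F_\infty$ for some $t_0$. Since $M\setminus F_\infty$ is open and $c_{\bm{v}}$ is continuous, there is a rational $q$ near $t_0$ with $c_{\bm{v}}(q)\notin F_\infty$. Also $\phi_q\bm{v}\in\Lambda$ by invariance, so $\phi_q\bm{v}\in\Lambda_0$. Since $\phi_t$ preserves $\mu_L$, each $\phi_{-q}(\Lambda_0)$ is null, and a countable union of null sets is null. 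Therefore $\mu_L(B)=0$, which is exactly the statement: for $\mu_L$-a.e.\ $\bm{v}\in\Lambda$, $c_{\bm{v}}(\mathbb{R})\subset F_\infty$.

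The only genuine subtlety is the passage from an uncountable union over $t\in\mathbb{R}$ to a countable one. This is handled by openness of $M\setminus F_\infty$, which comes from Lemma~\ref{lem_3_1}. The Fubini step is standard, since $\mu_L$ is locally the product of area and angle measure.
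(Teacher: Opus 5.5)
Your proposal is correct and follows the paper's proof essentially step for step. You use Fubini with Lemma~\ref{lem_3_2} to show $\Lambda\cap\pi^{-1}(M\setminus F_\infty)$ is null, flow invariance of $\mu_L$ at rational times, and closedness of $F_\infty$ (your openness of $M\setminus F_\infty$) to pass to all real times; the only cosmetic difference is that you cover the bad set directly by $\bigcup_{q\in\mathbb{Q}}\phi_{-q}(\Lambda_0)$, while the paper argues on the complement of that null union.
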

\begin{proof}
	In local coordinate $\bm{v}=(x,\theta)$, the Liouville measure $\mu_L$ can be written as
	\begin{equation}
		\mathrm{d}\mu_L(\bm{v})=c\mathrm{d}A(x)\mathrm{d}\theta,
	\end{equation}
	where $\mathrm{d}A$ is the Riemannian area element on $M$, $\mathrm{d}\theta$ is the angular measure on the fiber $T^{1}_x M\simeq S^1$, and $c=\dfrac{1}{2\pi \area(M)}$ is the normalization constant. From Lemma~\ref{lem_3_2}, we know that
	\begin{displaymath}
		\theta_{x}(\Lambda_{x})=0,\quad \forall x\notin F_{\infty}.
	\end{displaymath}

	Thus, we can conclude that
	\begin{displaymath}
		\mu_{L}(\left\{\bm{v}\in\Lambda\mid c_{\bm{v}}(0)\notin F_{\infty}\right\})=c\int_{\pi(\Lambda)\setminus F_{\infty}}\theta_{x}(\Lambda_{x})\,\mathrm{d}A(x)=0.
	\end{displaymath}

	Here $\Lambda$ is compact, then $\pi(\Lambda)$ is also compact and hence measurable. Note that both $\mu_L$ and $\Lambda$ are invariant under the geodesic flow $\phi_t$, for any $q\in\mathbb{Q}$, we obtain that:
	\begin{displaymath}
		\begin{aligned}
			\mu_L\left(\left\{\bm{v}\in\Lambda\mid c_{\bm{v}}(q)\notin F_{\infty}\right\}\right) & =\mu_L\left(\left\{\bm{v}\in\Lambda\mid c_{\phi_{q}\bm{v}}(0)\notin F_{\infty}\right\}\right)    \\
			                                                                                     & =\mu_{L}\left(\phi_{-q}\left\{\bm{v}\in\Lambda\mid c_{\bm{v}}(0)\notin F_{\infty}\right\}\right) \\
			                                                                                     & =0.
		\end{aligned}
	\end{displaymath}

	By the countable subadditivity of measure, we have
	\begin{displaymath}
		\mu_L\left(\bigcup_{q\in\mathbb{Q}}\left\{\bm{v}\in\Lambda\mid c_{\bm{v}}(q)\notin F_{\infty}\right\}\right)=0.
	\end{displaymath}

	Let $N$ denote this union, which is a set of zero Liouville measure in $\Lambda$. Thus, for any $\bm{u}\in\Lambda\setminus N$, for any $q\in\mathbb{Q}$, we always have $c_{\bm{u}}(q)\in F_{\infty}$.

	From Lemma~\ref{lem_3_1}, $F_{\infty}$ is a closed set. Therefore, $c_{\bm{u}}(\mathbb{R})\subset F_{\infty}$ as required.
\end{proof}

\begin{remark}
	If $\mu_L(\Lambda)>0$, Proposition~\ref{prop_3_1} and Fubini's theorem imply the existence of a measurable set $E\subset F_{\infty}$ of positive area such that, for every $x\in E$, a set of initial directions of positive angular measure generates geodesics entirely stay in $F_{\infty}$.
\end{remark}

Further, as to the Hausdorff dimension, we have the following result:

\begin{proposition}[Hausdorff Dimension]\label{prop_3_2}
	\begin{displaymath}
		\dim_{H}\Lambda \leq \max\left\{2, 1+\dim_{H}F_{\infty}\right\}
	\end{displaymath}
\end{proposition}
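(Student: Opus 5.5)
Split $\Lambda$ according to whether the base point lies in $F_{\infty}$:
\begin{displaymath}
	\Lambda=\Lambda_{1}\cup\Lambda_{2},\qquad \Lambda_{1}=\Lambda\cap\pi^{-1}(M\setminus F_{\infty}),\quad \Lambda_{2}=\Lambda\cap\pi^{-1}(F_{\infty}).
\end{displaymath}
Since Hausdorff dimension of a finite union is the maximum, it suffices to show $\dim_{H}\Lambda_{1}\le 2$ and $\dim_{H}\Lambda_{2}\le 1+\dim_{H}F_{\infty}$. Throughout I use a Sasaki-type metric on $T^{1}M$, which is locally bi-Lipschitz to the product metric in coordinates $(x,\theta)$, so dimensions may be computed in local trivializations $U\times S^{1}$.

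\textbf{The part $\Lambda_{2}$.} We have $\Lambda_{2}\subset\pi^{-1}(F_{\infty})$, which is locally bi-Lipschitz to $(F_{\infty}\cap U)\times S^{1}$. By the product inequality for Hausdorff dimension when one factor is a smooth manifold (upper box dimension of $S^{1}$ equals $1$),
\begin{displaymath}
	\dim_{H}\bigl(A\times S^{1}\bigr)\le \dim_{H}A+\overline{\dim}_{B}S^{1}=\dim_{H}A+1 .
\end{displaymath}
Covering $M$ by finitely many charts then gives $\dim_{H}\Lambda_{2}\le 1+\dim_{H}F_{\infty}$.

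\textbf{The part $\Lambda_{1}$.} This is the main step. By Lemma~\ref{lem_3_2}, each fiber $\Lambda_{x}$ with $x\notin F_{\infty}$ is finite, so heuristically $\Lambda_{1}$ is a finite-to-one graph over a subset of the surface and should have dimension at most $2$. A general set with finite fibers over a $2$-dimensional base can still have larger dimension, so the argument needs uniform control. I would stratify by the order of vanishing. For each $m\ge 2$, let
\begin{displaymath}
	Z_{m}=\{x : \nabla^{j}K(x)=0 \text{ for } j<m,\ \nabla^{m}K(x)\neq 0\}.
\end{displaymath}
Then $M\setminus F_{\infty}$ is contained in $\{K<0\}\cup\bigcup_{m}Z_{m}$, and only the $Z_{m}$ contribute, since $\Lambda_{x}=\emptyset$ when $K(x)<0$. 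By countable stability it is enough to treat one $Z_{m}$. On it, $\Lambda\cap\pi^{-1}(Z_{m})$ is contained in the zero set
\begin{displaymath}
	\{(x,\theta) : P_{m,x}(\theta)=0\}\subset T^{1}M,
\end{displaymath}
where $P_{m,x}(\theta)$ is the trigonometric polynomial \eqref{eq_3_3}. Its coefficients are smooth in $x$ and it is not identically zero in $\theta$ for $x\in Z_{m}$.

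The plan is to apply an implicit-function or Weierstrass-preparation argument near each point $(x_{0},\theta_{0})$ with $x_{0}\in Z_{m}$. Since $P_{m,x_{0}}$ is a nonzero polynomial, $\partial_{\theta}^{k}P_{m,x_{0}}(\theta_{0})\neq 0$ for some $k\le m$. The Malgrange preparation theorem then writes the zero set locally as the zero set of a degree-$k$ polynomial in $\theta$ with smooth coefficients in $x$. The set where the discriminant is nonzero is a union of finitely many smooth graphs over an open set of $M$, hence has dimension at most $2$. On the discriminant locus one has to recurse.

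I expect this recursion to be the main obstacle. I would try to avoid it with a cruder covering argument instead. For a fixed $x$, the polynomial in $\theta$ has at most $2m$ roots. Using a Remez/Łojasiewicz-type inequality on a compact piece of $Z_{m}$ where $|\nabla^{m}K|\ge\delta$, the roots for $x$ in a ball of radius $r$ lie within $Cr^{1/m}$-neighborhoods of at most $2m$ points.

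That estimate alone is lossy, so it only gives dimension at most $2+$something. Better is to exploit that fibers of $\Lambda$ are genuinely geodesic-invariant. Any $\bm v\in\Lambda$ also satisfies
\begin{displaymath}
	P_{m,\pi(\phi_t\bm v)}(\phi_t\bm v)=0 \quad \text{for all } t.
\end{displaymath}
Combined with flow-invariance, it then suffices to bound the dimension of the cross-section $\Lambda_{1}\cap\{\text{transversal}\}$ by $1$ and add $1$ for the flow direction. Concretely, I would prove $\dim_{H}\Lambda_{1}\le 2$ by showing that locally $\Lambda_{1}$ lies in a countable union of Lipschitz graphs $\theta=\theta_{i}(x)$ over $2$-dimensional sets, with the discriminant strata handled by induction on $k$.
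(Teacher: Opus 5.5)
Your treatment of $\Lambda_{2}$ is fine. The inequality $\dim_{H}(A\times S^{1})\le\dim_{H}A+\overline{\dim}_{B}S^{1}$ is the right justification, and it is slightly more careful than the paper's one-line remark. The gap is in $\Lambda_{1}$: you never actually establish $\dim_{H}\Lambda_{1}\le 2$.

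Each of your three strategies stops short:
\begin{enumerate}
\item The preparation-theorem route leaves the discriminant locus open. Since the coefficients depend only smoothly on $x$, there is no a priori reason that locus is small, so ``recurse'' is not yet an argument.
\item The Remez/{\L}ojasiewicz covering is, as you note yourself, lossy.
\item The reduction to a one-dimensional transversal plus the flow direction is asserted without any bound on the transversal.
\end{enumerate}
Your last sentence is a plan rather than a proof.

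\textbf{The missing idea.} No fiber analysis is needed, because the base of $\Lambda_{1}$ is already one-dimensional. Your worry about finite fibers over a two-dimensional base is legitimate in general, but it does not arise here. Take $x\in Z(K)\setminus F_{\infty}$. In a chart, choose a multi-index $\alpha$ of minimal order with $\partial^{\alpha}K(x)\neq 0$, so $|\alpha|\ge 2$. Pick $i$ with $\alpha_{i}>0$ and set $\beta=\alpha-e_{i}$. Then $\partial^{\beta}K(x)=0$ and $d(\partial^{\beta}K)_{x}\neq 0$, so by the implicit function theorem $x$ lies on a smooth curve inside $\{\partial^{\beta}K=0\}$.

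Countably many charts, multi-indices and curves (by second countability) then cover $Z(K)\setminus F_{\infty}\supset\pi(\Lambda_{1})$. The preimage under $\pi$ of each curve is a smooth surface in $T^{1}M$, and countable stability gives $\dim_{H}\Lambda_{1}\le 2$. This is the paper's proof. It uses only $\Lambda\subset\pi^{-1}(Z(K))$, not the fiber finiteness of Lemma~\ref{lem_3_2}.

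\textbf{Repairing your own route.} If you want to keep your fiberwise picture, the same minimal-order trick applied in the variable $\theta$ closes it without Malgrange or discriminants. Set $F_{m}(x,\theta)=\nabla^{m}K(x)(\bm{v},\ldots,\bm{v})$ with $\bm{v}=\cos\theta\,\bm{e}_{1}(x)+\sin\theta\,\bm{e}_{2}(x)$. A point of $\Lambda$ over $Z_{m}$ is a zero of the nonzero trigonometric polynomial $F_{m}(x,\cdot)$, of some finite multiplicity $k\ge 1$. It therefore lies on the surface $\{\partial_{\theta}^{k-1}F_{m}=0,\ \partial_{\theta}^{k}F_{m}\neq 0\}$, which is locally a smooth graph $\theta=g(x)$. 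Taking the countable union over $m$, $k$ and charts gives the bound.
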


\begin{proof}
	Set $Z(K)=\{x\in M\mid K(x)=0\}$ and we know $\Lambda\subset\pi^{-1}(Z(K))$. We first show that $Z(K)\setminus F_{\infty}$ is covered by at most countably many smooth curves on $M$.

	Choose a countable smooth atlas $(U_{j},\psi_{j})$ covers $M$. In each $U_{j}$ and each multi-index $\beta\in\mathbb{N}^{2}$, define that
	\begin{displaymath}
		H_{j,\beta}=\{y\in U_{j}\mid \partial^{\beta}K(y)=0, {\mathrm{d}(\partial^{\beta}K)}_{y}\neq 0\}.
	\end{displaymath}

	For each $p\in H_{j,\beta}$, the Implicit Function Theorem suggests there exist an open neighborhood $V_{p}\subset U_{j}$, such that
	\begin{displaymath}
		H_{j,\beta}\cap V_{p}={(\partial^{\beta}K)}^{-1}(0)\cap V_{p}
	\end{displaymath}
	is a piece of smooth curve in $U_j$. Given $U_{j}$ is second countable, for all $p\in H_{j,\beta}$, the relative open cover ${\{H_{j,\beta}\cap V_{p}\}}_{p}$ has a countable sub-cover.

	Now for any $x\in Z(K)\setminus F_{\infty}$ and $x\in U_{j}$ for some $j$, since $x$ is not in $F_{\infty}$, we can find a multi-index $\alpha$ with minimal total order, such that
	\begin{displaymath}
		\partial^{\alpha} K(x)\neq 0.
	\end{displaymath}

	Note that $K(x)=0$, we have $|\alpha|\ge 2$. Choose $i\in\{1,2\}$ with $\alpha_{i}>0$ and let $\beta=\alpha-\bm{e}_{i}$, we have
	\begin{displaymath}
		\partial^{\beta} K(x)=0,\qquad \partial^{\alpha} K(x)\neq 0,\qquad x\in H_{j,\beta}.
	\end{displaymath}

	Thus, $Z(K)\setminus F_{\infty}\subset \bigcup_{j,\beta} H_{j,\beta}$. Each $H_{j,\beta}$ is covered by countably many smooth curves, $j$ and $\beta$ are countable indices, hence $Z(K)\setminus F_{\infty}$ is covered by countably many smooth curves, denoted by $\{l_{i}\}$, with dimension at most $1$.

	Note that the projection $\pi: T^{1}M\to M$ is a smooth submersion, for each $i$, $\pi^{-1}(l_{i})$ is a smooth embedded two-dimensional submanifold in $T^{1}M$. Hence, we get that
	\begin{displaymath}
		\Lambda\setminus\pi^{-1}(F_{\infty})\subset \pi^{-1}(Z(K)\setminus F_{\infty})
	\end{displaymath}
	is covered by countably many smooth two-dimension submanifolds, which has Hausdorff dimension at most $2$.

	For the remaining part, consider the dimension of $\pi^{-1}(F_{\infty})$. The fiber on each point of $x$ is one dimensional, thus
	\begin{displaymath}
		\dim_{H}\pi^{-1}(F_{\infty})\le \dim_{H}F_{\infty}+1.
	\end{displaymath}

	Finally, note that
	\begin{displaymath}
		\Lambda\subset \left(\Lambda\setminus\pi^{-1}(F_{\infty})\right)\bigcup \pi^{-1}(F_{\infty}),
	\end{displaymath}
	and the Hausdorff dimension is controlled by
	\begin{displaymath}
		\dim_{H}\Lambda\leq\max\left\{2,\dim_{H}(F_{\infty})+1\right\}.
	\end{displaymath}
\end{proof}

In particular, $\dim_{H}F_{\infty}<2$ implies $\dim_{H}\Lambda<3$, and hence $\mu_L(\Lambda)=0$. The condition $\area(F_\infty)=0$, however, does not imply $\dim_{H}F_{\infty}<2$.

Now we are ready to prove the first main theorem.

\surface*

\begin{proof}
	By Proposition~\ref{prop_3_1},

	\begin{displaymath}
		\begin{aligned}
			\mu_L(\Lambda) & =\mu_L(\Lambda\cap\pi^{-1}(F_\infty))                            \\
			               & \le \mu_L((\pi^{-1}(F_{\infty})))                                \\
			               & =\int_{F_{\infty}} \frac{1}{2\pi \area(M)} 2\pi \,\mathrm{d}A(x) \\
			               & =\frac{\area(F_\infty)}{\area(M)}=0.
		\end{aligned}
	\end{displaymath}

	Therefore, $\mu_{L}(\Lambda)=0$, Theorem~\ref{thm_2_2} gives yields the ergodicity.
\end{proof}

Next we prove Corollary~\ref{cor_2_1}

\analysurface*
\begin{proof}
	Assume $F_{\infty}\neq\emptyset$, for any $x\in F_{\infty}$, all order derivatives of $K(x)$ vanish. Note that $K:M\to\mathbb{R}$ is now a real analytic function following from the metric is real analytic. By the connectedness and analytic continuation, $K\equiv0$ on $M$.

	On the other hand, Gauss-Bonnet Theorem requires
	\begin{equation}
		0=\int_{M} K(x)\,\mathrm{d}A(x)=2\pi\chi(M)<0.
	\end{equation}

	Contradiction. Therefore, the Theorem~\ref{thm_2_3} applies in the case $F_{\infty}$ is an empty set.
\end{proof}

\section{One Example on Surfaces}\label{sec4}

On a smooth, connected and closed surface of nonpositive curvature and genus at least $2$, suppose the set of points with negative curvature $N=\{x\in M\mid K(x)<0\}$ has only finitely many connected components, then the geodesic flow is ergodic with respect to the Liouville measure~\cite{Wu2015,WuLiuWang2023}.

This mild assumption is actually independent of our condition, $\area(F_{\infty})=0$. In fact, the following example shows that the condition $\area(F_{\infty})=0$ does not require the negative-curvature part to have finitely many connected components.

\begin{example}
	There exists a closed, connected surface of nonpositive curvature and genus $2$, such that:
	\begin{enumerate}
		\item $\area(F_{\infty})=0$.
		\item The set of negative curvature points $N$ has infinitely many connected components.
		\item The geodesic flow is ergodic but not Anosov.
	\end{enumerate}
\end{example}

We start with a closed hyperbolic surface $(M,g_0)$ of genus $2$, where $g_{0}$ is a smooth hyperbolic metric with constant Gaussian curvature $K_{g_0}\equiv -1$. Let the surface admit an orientation-reversing isometric involution $\iota$ whose fixed-point set is a separating simple closed geodesic $c_{0}$. $M\setminus c_{0}$ has two connected components, each of them is a hyperbolic one-holed torus (without the boundary) and $\iota$ exchanges them.

Equivalently, we can construct $(M,g_0)$ as the double of a hyperbolic one-holed torus of constant Gaussian curvature $-1$ with totally geodesic boundary $c_{0}$.
\begin{center}
	\begin{tikzpicture}
		\begin{scope}[scale=1]
			\path[surface]
			(-3.00,0).. controls (-3.00,1.30) and (-2.28,1.82).. (-1.42,1.66).. controls (-0.82,1.56) and (-0.42,1.22).. (0,1.14).. controls (0.42,1.22) and (0.82,1.56).. (1.42,1.66).. controls (2.28,1.82) and (3.00,1.30).. (3.00,0).. controls (3.00,-1.30) and (2.28,-1.82).. (1.42,-1.66).. controls (0.82,-1.56) and (0.42,-1.22).. (0,-1.14).. controls (-0.42,-1.22) and (-0.82,-1.56).. (-1.42,-1.66).. controls (-2.28,-1.82) and (-3.00,-1.30).. (-3.00,0)-- cycle;
			\path[fill=white, draw=surfaceedge, line width=0.9pt] (-1.48,0) ellipse (0.67 and 0.49);
			\path[fill=white, draw=surfaceedge, line width=0.9pt] (1.48,0) ellipse (0.67 and 0.49);
			\draw[contour] (-2.72,0.80)..controls (-1.92,1.28) and (-1.02,1.20).. (-0.28,0.83);
			\draw[contour] (2.72,0.80)..controls (1.92,1.28) and (1.02,1.20).. (0.28,0.83);
			\draw[contour] (-2.72,-0.80)..controls (-1.92,-1.28) and (-1.02,-1.20).. (-0.28,-0.83);
			\draw[contour] (2.72,-0.80)..controls (1.92,-1.28) and (1.02,-1.20).. ( 0.28,-0.83);
			\draw[seam, dashed] (0,1.14) arc[start angle=90,end angle=270,x radius=0.30,y radius=1.14];
			\draw[seam] (0,-1.14) arc[start angle=-90,end angle=90,x radius=0.30,y radius=1.14];
		\end{scope}

		\node[font=\small, text=glueboundary, fill=white, inner sep=1.5pt] at (0,1.6) {$c_{0}$};
		\node[font=\small] at (0,-2.53) {$K_{g_{0}}\equiv -1$\qquad $\operatorname{Fix}(\iota)=c_{0}$\qquad $\iota^{\ast}g_{0}=g_{0}$};

		\node[note] at (0.35,-3.3) {$c_0$ is the separating closed geodesic.};
	\end{tikzpicture}
\end{center}

Now choose a sufficiently small, $\iota$-invariant, tubular neighborhood $U$ containing $c_0$ and the local coordinate $U\simeq (-3a,3a)\times S^{1}$ where $a$ is some small constant. Assign the coordinate $(r,\theta)$ to the point in this trivialization with $\iota(r,\theta)=(-r,\theta)$. In particular, we have $c_0=\{(0,\theta)\mid \theta\in S^{1}\}$ in this coordinate.

Define function $s:\mathbb{R}\to\mathbb{R}$ by
\begin{equation}
	s(t)=\left\{
	\begin{aligned}
		 & \mathrm{e}^{-\frac{1}{t}},\qquad & t>0,     \\
		 & 0,\qquad                         & t\leq 0.
	\end{aligned}
	\right.
\end{equation}

This function $s$ is smooth with $s^{(j)}(0)=0$ for all $j\ge 0$.

Now we can define the cutoff function
\begin{equation}
	\begin{aligned}
		\eta:(-3a,3a) & \longrightarrow \mathbb{R} &                                                                  \\
		r             & \longmapsto \eta(r)        & \coloneq \dfrac{s(4a^2-r^2)}{s(4a^2-r^2)+s(r^2-\frac{9a^2}{4})}.
	\end{aligned}
\end{equation}

Easy to check that this cutoff function $\eta$ is smooth, constant $1$ for $|r|\le \frac{3a}{2}$ and $0$ for $|r|\geq 2a$. It takes values in $[0,1]$, and all derivatives of positive order vanish at the transition endpoints.

Lastly, define an even function $H:(-3a,3a)\to \mathbb{R}$ by
\begin{equation}
	H(r)=\left\{
	\begin{aligned}
		 & \mathrm{e}^{-\frac{a^2}{r^2}}\sin^{2}\left(\frac{a\pi}{r}\right),\qquad & r\neq 0, \\
		 & 0,\qquad                                                                & r=0.
	\end{aligned}
	\right.
\end{equation}

Clearly, $H\geq 0$. And $H(r)=0$ if and only if $r=0$ or $r=\frac{a}{n}$ for integer $n\neq 0$. Denote the zeros of $H$ by
\begin{displaymath}
	Z(H)=H^{-1}(0)=\{0\}\bigcup_{n=1}^{\infty}\{\frac{a}{n},-\frac{a}{n}\}.
\end{displaymath}

Next, we can compute the derivatives. When $r=0$, we have $H^{(n)}(0)=0$ for every integer $n\ge 0$, $H(r)$ is infinitely flat at $0$. For the other zeros $r_n=\frac{a}{n}$, we have
\begin{displaymath}
	H(r_n)=H'(r_n)=0;\qquad H''(r_n)=\frac{2\pi^2}{a^2} n^4\mathrm{e}^{-n^2}>0.
\end{displaymath}

Thus, $H$ is not infinitely flat at $r_n$ or $-r_n$.

Lastly, define a function $q:M\to \mathbb{R}$ by
\begin{displaymath}
	q(x)=\left\{
	\begin{aligned}
		 & \eta(r)H(r)+(1-\eta(r)),\qquad &  & x\in U,            \\
		 & 1,\qquad                       &  & x\in M\setminus U.
	\end{aligned}
	\right.
\end{displaymath}

Easy to check that, $q(x)$ is a smooth function on $M$ because near the boundary of the tubular $q\equiv1$. Moreover, we have $q\circ\iota=q$. We can compute that for $x=(r,\theta)\in U$,
\begin{displaymath}
	q(x)=q(r,\theta)=\left\{
	\begin{aligned}
		 & H(r),\qquad              &  & |r|\leq\frac{3a}{2},         \\
		 & 1+\eta(r)(H(r)-1),\qquad &  & \frac{3a}{2}\leq |r|\leq 2a, \\
		 & 1,\qquad                 &  & |r|\geq 2a.
	\end{aligned}
	\right.
\end{displaymath}

The set of zeros of $q(x)$ is given by circles:
\begin{displaymath}
	Z(q)=q^{-1}(0)=c_{0}\bigcup_{j=1}^{\infty}(c_{j}^{+}\cup c_{j}^{-}),\qquad c_{j}^{\pm}=\{(\pm\frac{a}{j},\theta)\mid \theta\in S^{1}\}.
\end{displaymath}

Note that $q$ is not constantly $0$ on $M$, by Kazdan-Warner Theorem (see~\cite{KazdanWarner1974}), there is a unique metric $g$ on $M$ such that the Gaussian curvature $K_{g}(x)=-q(x)$. In fact, there exists a unique smooth function $u$, such that
\begin{equation}
	\Delta_{g_0}u-K_{g_{0}}=\Delta_{g_0}u+1=q\mathrm{e}^{2u}.
\end{equation}

Let $g=\mathrm{e}^{2u}g_{0}$, we can compute
\begin{displaymath}
	K_{g}=\mathrm{e}^{-2u}(K_{g_{0}}-\Delta_{g_{0}}u)=e^{-2u}(-1-\Delta_{g_{0}}u)=-q.
\end{displaymath}

Since $\iota$ preserves both $g_{0}$ and $q$, the function $u\circ\iota$ satisfies the same equation as $u$. Uniqueness gives $u\circ\iota=u$, and hence $\iota^{\ast}g=g$. Therefore, the fixed-point set $c_0$ is a geodesic with respect to $g$. Since $K_{g}|_{c_0}=0$, it is a closed flat geodesic.

By Theorem~\ref{thm_2_1}, the geodesic flow of $g$ is not Anosov.

Next, we will verify $\area(F_{\infty}(g))=0$. By definition, we have
\begin{displaymath}
	F_{\infty}(g)=\{x\mid \nabla_{g}^{j}q(x)=0,\quad\forall j\ge 0\}\subset Z(q).
\end{displaymath}

On $c_{0}$, $r=0$, we know $q(0,\theta)=H(0)$ is infinitely flat, thus $c_{0}\subset F_{\infty}(g)$. But the other circles $c_{n}^{\pm}$, we have
\begin{displaymath}
	K_{g}(x)\vert_{x\in c_{n}^{\pm}}=-H(\pm\frac{a}{n})=0,\qquad \partial^2_{r}H(\pm\frac{a}{n})\neq 0.
\end{displaymath}

Therefore, we can conclude that $F_{\infty}(g)=c_0$, which has zero area. Thus, the geodesic flow is ergodic with respect to the Liouville measure.

Lastly, each annulus $(a/(n+1),a/n)\times S^1$ and its reflected counterpart is contained in $N$ and is bounded by zero-curvature circles. These annuli are distinct connected components of $N$. Hence, $N$ has infinitely many connected components.

\section{Ergodicity on Smooth Manifolds in Higher Dimensions}\label{sec5}
Throughout this section, we will work on smooth closed rank one manifold $(M,g)$ of nonpositive curvature. Let $\mu_L$ denote the normalized Liouville measure with $\mu_L(T^{1}M)=1$ and $\vol$ is the Riemannian volume of $M$. For any $x\in M$, let $\sigma_{x}$ be the smooth normalized rotationally invariant measure on the sphere $T^{1}_{x}M\simeq S^{n-1}$, so that $\sigma_{x}(T^{1}_{x}M)=1$. Choose $\bm{v}\in T^{1}_{x}M$, define the orthogonal complement of $\bm{v}$ as
\begin{displaymath}
	E_{\bm{v}}\coloneq\bm{v}^{\perp}=\left\{\bm{w}\in T_{x}M\mid \langle\bm{v},\bm{w}\rangle=0\right\}.
\end{displaymath}

For any $t\in\mathbb{R}$, let
\begin{displaymath}
	P_{\bm{v},t}:E_{\bm{v}}\longrightarrow E_{\phi_{t}(\bm{v})}
\end{displaymath}
be the parallel transport along the geodesic $c_{\bm{v}}$. Since the parallel transport along a fixed geodesic preserves the Riemannian inner product, this map is well-defined and a linear isometry.

By definition, the singular set can be written as
\begin{displaymath}
	\begin{aligned}
		\sing & =\left\{\bm{v}\in T^{1}M\mid \rank(\bm{v})\ge2\right\}                                                                                                         \\
		      & =\left\{\bm{v}\in T^{1}M\mid \exists 0\neq \bm{w}\in E_{\bm{v}},~R(P_{\bm{v},t}(\bm{w}),c'_{\bm{v}}(t))c'_{\bm{v}}(t)=0,\quad \forall t\in\mathbb{R} \right\}.
	\end{aligned}
\end{displaymath}

By the Pesin-Hopf argument, the geodesic flow is ergodic on the regular rank one set; therefore, if $\mu_L(\sing)=0$, it is ergodic on $T^{1}M$ with respect to the Liouville measure. See~\cite{BallmannBrin1982,Burns1983,BurnsMatveev2021, Pesin1977}. Our target is to prove the singular set has zero Liouville measure. Start with the curvature Jacobi operator:

\begin{definition}[Curvature Jacobi Operator]\label{def_5_1}
	For any $x\in M$ and $\bm{0}\neq\bm{v}\in T_{x}M$, let $J_{\bm{v}}:T_{x}M \to T_{x}M$ denote the Curvature Jacobi Operator defined by
	\begin{displaymath}
		J_{\bm{v}}(\bm{w})=-R(\bm{w},\bm{v})\bm{v}.
	\end{displaymath}
\end{definition}

Restricting the operator to $E_{\bm{v}}$ for the given $\bm{v}\neq 0$, we can also define the Reduced Jacobi Operator as

\begin{definition}[Reduced Jacobi Operator]\label{def_5_2}
	For any $x\in M$ and $\bm{v}\in T^{1}_{x}M$, we define the Reduced Normal Jacobi Operator as
	\begin{displaymath}
		\begin{aligned}
			J^{\perp}_{\bm{v}}: & E_{\bm{v}} & \longrightarrow & ~~E_{\bm{v}}             \\
			                    & \bm{w}     & \longmapsto     & -R(\bm{w},\bm{v})\bm{v}.
		\end{aligned}
	\end{displaymath}
\end{definition}

For convenience, our definitions~\ref{def_5_1} and~\ref{def_5_2} differ from the classical ones by a sign. More discussions of these operators can be found in~\cite{BlazicGilkeyNikcevicSimon2006,Gilkey2001}.

\begin{proposition}
	The curvature Jacobi operator and reduced Jacobi operator have the following properties:
	\begin{enumerate}
		\item $J_{\bm{v}}$ is linear and self-adjoint, so is $J^{\perp}_{\bm{v}}$.
		\item $J_{\bm{v}}(\bm{v})=0$. $\bm{v}$ is an eigenvector to $0$.
		\item $J_{\bm{v}}(E_{\bm{v}})\subset E_{\bm{v}}$, thus, the reduced Jacobi operator is well-defined.
		\item $J^{\perp}_{\bm{v}}$ is positive semidefinite if $(M,g)$ has nonpositive sectional curvature.
	\end{enumerate}
\end{proposition}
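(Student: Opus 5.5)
All four properties follow directly from the algebraic symmetries of the curvature tensor, namely skew-symmetry $R(\bm{w},\bm{v})=-R(\bm{v},\bm{w})$ and pair symmetry $\langle R(\bm{x},\bm{y})\bm{z},\bm{u}\rangle=\langle R(\bm{z},\bm{u})\bm{x},\bm{y}\rangle$. The plan is to verify them in the listed order at a fixed point $x$ and a fixed $\bm{v}$.

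For (1), linearity of $J_{\bm{v}}$ in $\bm{w}$ is immediate because $R$ is a tensor. For self-adjointness, take $\bm{w},\bm{u}\in T_xM$. Pair symmetry gives
\begin{displaymath}
\langle J_{\bm{v}}\bm{w},\bm{u}\rangle=-\langle R(\bm{w},\bm{v})\bm{v},\bm{u}\rangle=-\langle R(\bm{v},\bm{u})\bm{w},\bm{v}\rangle .
\end{displaymath}
Applying skew-symmetry in both pairs, the last expression equals $-\langle R(\bm{u},\bm{v})\bm{v},\bm{w}\rangle=\langle J_{\bm{v}}\bm{u},\bm{w}\rangle$. For (2), we have $J_{\bm{v}}\bm{v}=-R(\bm{v},\bm{v})\bm{v}=0$, again by skew-symmetry.

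For (3), take $\bm{w}\in E_{\bm{v}}$. By (1) and (2), $\langle J_{\bm{v}}\bm{w},\bm{v}\rangle=\langle \bm{w},J_{\bm{v}}\bm{v}\rangle=0$, so $J_{\bm{v}}\bm{w}\in E_{\bm{v}}$. Hence the restriction $J^{\perp}_{\bm{v}}$ is a well-defined linear map $E_{\bm{v}}\to E_{\bm{v}}$. It is self-adjoint with respect to the restricted inner product, since the identity from (1) holds in particular for $\bm{w},\bm{u}\in E_{\bm{v}}$; this finishes the second half of (1).

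For (4), take $\bm{w}\in E_{\bm{v}}$. Using the paper's convention $K(\bm{v},\bm{w})=\langle R(\bm{w},\bm{v})\bm{v},\bm{w}\rangle$ for unit orthogonal vectors, we get
\begin{displaymath}
\langle J^{\perp}_{\bm{v}}\bm{w},\bm{w}\rangle=-\langle R(\bm{w},\bm{v})\bm{v},\bm{w}\rangle=-|\bm{w}|^{2}\,K(\bm{v},\bm{w}/|\bm{w}|)\ge 0
\end{displaymath}
whenever $\bm{w}\neq 0$, because $|\bm{v}|=1$ and the sectional curvature is nonpositive. The expression is trivially zero when $\bm{w}=0$, so $J^{\perp}_{\bm{v}}$ is positive semidefinite.

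None of these steps is a genuine obstacle. The only point needing care is the sign convention: with the paper's definition of $K$, the sign flip built into $J_{\bm{v}}$ is exactly what makes $J^{\perp}_{\bm{v}}$ positive rather than negative semidefinite, so each sign must be checked against the stated convention.
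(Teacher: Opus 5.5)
Your proof is correct and follows the paper's argument. You obtain self-adjointness from the symmetries of $R$, $J_{\bm{v}}\bm{v}=0$ from skew-symmetry, and positive semidefiniteness from $\langle J^{\perp}_{\bm{v}}\bm{w},\bm{w}\rangle=-K(\bm{v},\bm{w})$. The only cosmetic difference is in (3): you deduce $\langle J_{\bm{v}}\bm{w},\bm{v}\rangle=0$ from self-adjointness together with $J_{\bm{v}}\bm{v}=0$, whereas the paper reads it off directly from $\langle R(\bm{w},\bm{v})\bm{v},\bm{v}\rangle=0$.
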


\begin{proof}
	Linearity follows from the Riemannian curvature tensor. For any $\bm{u},\bm{w}\in T_{x}M$, we can compute
	\begin{displaymath}
		\langle J_{\bm{v}}(\bm{w}),\bm{u}\rangle=-\langle R(\bm{w},\bm{v})\bm{v},\bm{u}\rangle=-\langle R(\bm{u},\bm{v})\bm{v},\bm{w}\rangle= \langle J_{\bm{v}}(\bm{u}),\bm{w}\rangle.
	\end{displaymath}

	Hence, $J^{\ast}_{\bm{v}}=J_{\bm{v}}$. $J_{\bm{v}}(\bm{v})=0$ is clear from the definition.

	For any $\bm{w}\in E_{\bm{v}}$, we have
	\begin{displaymath}
		\langle J_{\bm{v}}(\bm{w}),\bm{v}\rangle=-\langle R(\bm{w},\bm{v})\bm{v},\bm{v}\rangle=0.
	\end{displaymath}

	Thus, $J_{\bm{v}}(\bm{w})\in E_{\bm{v}}$. The restriction $J^{\perp}$ is well-defined.

	Lastly, for any unit vector $\bm{w}\in E_{\bm{v}}$, we have
	\begin{displaymath}
		\langle J^{\perp}_{\bm{v}}(\bm{w}),\bm{w}\rangle=-\langle R(\bm{w},\bm{v})\bm{v},\bm{w}\rangle=-K(\bm{v},\bm{w}).
	\end{displaymath}

	This is the negative of the sectional curvature, hence, it is positive semidefinite given $(M,g)$ has nonpositive sectional curvature.
\end{proof}

\begin{definition}[Reduced Jacobi Curvature Determinant]\label{def_5_3}
	For given $x$ and $\bm{v}\in T^{1}_{x}M$, we denote the determinant of $J^{\perp}_{\bm{v}}$ by
	\begin{displaymath}
		\begin{aligned}
			q_{J}: & T^{1}M     & \longrightarrow & \mathbb{R}                \\
			       & (x,\bm{v}) & \longmapsto     & \det(J^{\perp}_{\bm{v}}).
		\end{aligned}
	\end{displaymath}

	On manifolds of nonpositive curvature, we can see that $q_{J}\geq 0$.
\end{definition}

Note that $J^{\perp}_{\bm{v}}$ is self-adjoint, we can choose an orthonormal eigenbasis $\{\bm{e}_1,\ldots,\bm{e}_{n-1}\}$ of $E_{\bm{v}}$. Denote the sectional curvature by
\begin{displaymath}
	\kappa_{i}=-\langle J^{\perp}_{\bm{v}}(\bm{e}_i),\bm{e}_i\rangle=\langle R(\bm{e}_i,\bm{v})\bm{v},\bm{e}_i\rangle,
\end{displaymath}
which is the sectional curvature of $\bm{v}$ and $\bm{e}_i$. We can compute
\begin{equation}
	q_J(x,\bm{v})=\prod_{i=1}^{n-1}(-\kappa_{i}).
\end{equation}

By definition, we have
\begin{displaymath}
	q_{J}(x,\bm{v})=0 \Leftrightarrow\ker J^{\perp}_{\bm{v}}\neq \{0\}\Leftrightarrow \exists \bm{0}\neq\bm{w}\in E_{\bm{v}},~J^{\perp}_{\bm{v}}(\bm{w})=\bm{0}.
\end{displaymath}

\begin{lemma}\label{lem_5_1}
	Fix $x\in M$ and a local frame near $x$, denote $q_{J}^{x}=q_J(x,\cdot):T^{1}_{x}M\to \mathbb{R}$.
	\begin{displaymath}
		q_{J}^{x}(\bm{v})=\det(J_{\bm{v}}^{\perp}).
	\end{displaymath}

	Then $q_{J}^{x}$ is the restriction to $T^{1}_{x}M\simeq S^{n-1}$ of a homogeneous polynomial of degree $2(n-1)$ in the component of $\bm{v}$.
\end{lemma}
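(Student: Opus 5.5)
The plan is to express $\det(J^{\perp}_{\bm{v}})$ through an invariant of the full operator $J_{\bm{v}}$ on $T_xM$. Working with $J_{\bm{v}}$ directly avoids the fact that the domain $E_{\bm{v}}$ of $J^{\perp}_{\bm{v}}$ moves with $\bm{v}$.

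\textbf{Step 1: entries of $J_{\bm{v}}$.} Fix the frame $\{\bm{e}_1,\ldots,\bm{e}_n\}$ of $T_xM$ and take it orthonormal; any frame would do, since the invariants used below are frame-independent. Write $\bm{v}=\sum_k v_k\bm{e}_k$. Bilinearity of $R$ gives
\begin{displaymath}
	{(J_{\bm{v}})}_{ij}=-\sum_{k,l}\langle R(\bm{e}_j,\bm{e}_k)\bm{e}_l,\bm{e}_i\rangle\, v_k v_l .
\end{displaymath}
So every matrix entry of $J_{\bm{v}}$ is a homogeneous quadratic polynomial in $(v_1,\ldots,v_n)$, with coefficients given by the curvature tensor at $x$. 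This makes sense for all $\bm{v}\in T_xM$, not only unit vectors.

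\textbf{Step 2: the key identity.} Consider the $(n-1)$-st elementary symmetric function of the eigenvalues,
\begin{displaymath}
	\sigma_{n-1}(J_{\bm{v}})=\operatorname{tr}\Lambda^{n-1}J_{\bm{v}}=\sum_{i=1}^{n}\det\bigl(J_{\bm{v}}\bigr)_{\hat i\hat i},
\end{displaymath}
the sum of the principal $(n-1)\times(n-1)$ minors. Each minor is a determinant of an $(n-1)\times(n-1)$ matrix with quadratic entries. Hence $\sigma_{n-1}(J_{\bm{v}})$ is a homogeneous polynomial of degree $2(n-1)$ in the $v_k$.

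We then show it agrees with $q_J^x$ on the unit sphere. By the preceding proposition, $J_{\bm{v}}$ is self-adjoint, kills $\bm{v}$, and preserves $E_{\bm{v}}$. In the orthogonal splitting $T_xM=\mathbb{R}\bm{v}\oplus E_{\bm{v}}$ it is therefore block diagonal, $\operatorname{diag}(0,J^{\perp}_{\bm{v}})$. Its eigenvalues are $0,\lambda_1,\ldots,\lambda_{n-1}$, where the $\lambda_i$ are the eigenvalues of $J^{\perp}_{\bm{v}}$. Every product of $n-1$ distinct eigenvalues that contains the zero eigenvalue vanishes, so
\begin{displaymath}
	\sigma_{n-1}(J_{\bm{v}})=\lambda_1\cdots\lambda_{n-1}=\det(J^{\perp}_{\bm{v}})=q_J^x(\bm{v}).
\end{displaymath}

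\textbf{Step 3: conclusion.} Steps 1 and 2 together show that $q_J^x$ is the restriction to $T^1_xM$ of a homogeneous polynomial of degree $2(n-1)$. The polynomial is independent of the chosen frame, because $\operatorname{tr}\Lambda^{n-1}$ is a similarity invariant.

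\textbf{Main obstacle.} The only real difficulty is the moving domain $E_{\bm{v}}$, which Step 2 removes. A tempting alternative is $\det(J_{\bm{v}}+\bm{v}\otimes\bm{v}^{\flat})$, which also equals $q_J^x$ on unit vectors. However, it is homogeneous of degree $2n$, not $2(n-1)$, and equals $|\bm{v}|^2$ times the desired polynomial. For that reason I would use the elementary symmetric function, which produces the stated degree directly.
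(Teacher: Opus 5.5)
Your proposal is correct and is essentially the paper's own argument. Both write the entries of $J_{\bm{v}}$ in an orthonormal frame as quadratic forms in $v_1,\ldots,v_n$, then identify $\det(J^{\perp}_{\bm{v}})$ with $\sigma_{n-1}(J_{\bm{v}})=\operatorname{tr}\bigl(\bigwedge^{n-1}J_{\bm{v}}\bigr)$ via the invariant splitting $T_xM=\mathbb{R}\bm{v}\oplus E_{\bm{v}}$, on which $J_{\bm{v}}=0\oplus J^{\perp}_{\bm{v}}$. Writing $\sigma_{n-1}$ explicitly as the sum of principal $(n-1)\times(n-1)$ minors makes the degree count $2(n-1)$ slightly more transparent than in the paper, and your side remark that $\det(J_{\bm{v}}+\bm{v}\otimes\bm{v}^{\flat})=|\bm{v}|^{2}\sigma_{n-1}(J_{\bm{v}})$ is also correct.
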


\begin{proof}
	Choose a smooth orthonormal frame $\{\bm{e}_1(y),\ldots\bm{e}_n(y)\}$ for $y\in U$ in a small neighborhood $U$ of $x$. In $T_{x}M$, we use the basis $\{\bm{e}_1(x),\dots,\bm{e}_{n}(x)\}$ and write
	\begin{displaymath}
		J_{\bm{v}}={(a_{ij}(x,\bm{v}))}_{n\times n}.
	\end{displaymath}
	This is an $n\times n$ matrix with elements $a_{ij}(x,\bm{v})$ being smooth functions of $\bm{v}$. Write $\bm{v}=\sum_{k=1}^n v_k\bm{e}_{k}(x)$, we have

	\begin{displaymath}
		\begin{aligned}
			a_{ij}(x,\bm{v}) & =\langle J_{\bm{v}}(\bm{e}_j(x)),\bm{e}_{i}(x) \rangle                                                        \\
			                 & =-\langle R(\bm{e}_{j}(x),\bm{v})\bm{v},\bm{e}_{i}(x)\rangle                                                  \\
			                 & =-\langle R(\bm{e}_{j}(x),\sum_{k=1}^n v_k\bm{e}_{k}(x))(\sum_{l=1}^{n}v_l\bm{e}_{l}(x)),\bm{e}_{i}(x)\rangle \\
			                 & =-\sum_{k,l=1}^{n} v_k v_l\langle R(\bm{e}_{j}(x),\bm{e}_{k}(x))\bm{e}_{l}(x),\bm{e}_{i}(x)\rangle.
		\end{aligned}
	\end{displaymath}

	Once we fix $x$ and the frame, the curvature component are independent of $\bm{v}$. We can see that $a_{ij}(x,\bm{v})$ is a homogeneous polynomial of degree $2$ in terms of $v_k$'s.

	Claim that
	\begin{equation}
		q_{J}^{x}(\bm{v})=\det (J_{\bm{v}}^{\perp})=\operatorname{tr}(\bigwedge^{n-1}J_{\bm{v}})=\sigma_{n-1}(J_{\bm{v}}).
	\end{equation}

	Here $\sigma_{n-1}(J_{\bm{v}})$ is the $(n-1)$-th elementary symmetric polynomials defined in
	\begin{displaymath}
		\det(J_{\bm{v}}+t I)=t^n+\sigma_1(J_{\bm{v}})t^{n-1}+\cdots+\sigma_{n-1}(J_{\bm{v}})t+\det(J_{\bm{v}}).
	\end{displaymath}

	Note that $J_{\bm{v}}(\bm{v})=0$, we have the invariant decomposition $J_{\bm{v}}=0\oplus J^{\perp}_{\bm{v}}$ on $T_{x}M=\mathbb{R}\bm{v}\oplus E_{\bm{v}}$, thus, in $\sigma_{n-1}(J_{\bm{v}})$, all terms vanish except the product of the other $n-1$ eigenvalues of $J_{\bm{v}}$, which is exactly our $\det(J^{\perp}_{\bm{v}})$. Because each element in the matrix is a homogeneous polynomial of degree $2$ in terms of $v_{k}$'s, the corresponding $\sigma_{n-1}(J_{\bm{v}})$ is a homogeneous polynomial of degree $2(n-1)$ restricted on the sphere.
\end{proof}

\begin{definition}[Fiberwise Second Moment of the Reduced Jacobi Determinant]
	Define a function $h_J:M\to\mathbb{R}$ by
	\begin{displaymath}
		h_{J}(x)=\int_{T^{1}_{x}M}q_{J}^{2}(x,\bm{v})\,\mathrm{d}\sigma_{x}(\bm{v}).
	\end{displaymath}

	Clearly, $h_{J}$ is smooth, non-negative. $h_{J}(x)=0$ if and only if $q_{J}(x,\cdot)$ is constantly $0$ on $T^{1}_{x}M$.
\end{definition}

Now we will define the analogous set, denoted by $F^{J}_{\infty}$ to the infinite flat set $F_{\infty}$ (Definition~\ref{def_2_1}) as in the case of dimension $2$.

\begin{definition}
	On $M$ we define two sets:
	\begin{enumerate}
		\item The set of degenerate points $\mathcal{D}\coloneq h_{J}^{-1}(0)=\{x\in M\mid q_{J}^{x}\equiv 0\}$.
		\item The Infinite-order Vanishing Set of $h_{J}$:
		      \begin{displaymath}
			      F_{\infty}^{J}\coloneq\{x\in M\mid {(\nabla^{k}h_{J})}_{x}=0,\quad \forall k=0,1,\ldots\}.
		      \end{displaymath}
	\end{enumerate}
\end{definition}

As we computed in the proof of Lemma~\ref{lem_3_2}, for point $x\in F^{J}_{\infty}$, one can check for any multi-index $\alpha\in\mathbb{N}^{n}$, $\partial^{\alpha}h_{J}(x)=0$ for any $|\alpha|\ge 0$ in any local chart.

This definition coincide with $F_{\infty}$ on surface, on which $q_{J}(x,\bm{v})=-K(x)$ and thus, $h_{J}(x)={K(x)}^2$. At this time, $F^{J}_{\infty}=F_{\infty}$.

\begin{lemma}\label{lem_5_2}
	The set of zeros of finite-order of the smooth function $h_{J}$, denoted by $\mathcal{D}\setminus F^{J}_{\infty}$ can be covered by countably many hypersurfaces of dimension $n-1$. Thus, $\vol(\mathcal{D}\setminus F^{J}_{\infty})=0$.
\end{lemma}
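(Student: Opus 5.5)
The plan mirrors the proof of Proposition~\ref{prop_3_2}, with $h_J$ in place of $K$ and the implicit function theorem applied in dimension $n$.

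First fix a countable smooth atlas $\{(U_j,\psi_j)\}$ of $M$. For each chart $U_j$ and each multi-index $\beta\in\mathbb{N}^n$, set
\begin{displaymath}
	H_{j,\beta}=\{y\in U_j\mid \partial^{\beta}h_J(y)=0,\ {\mathrm{d}(\partial^{\beta}h_J)}_y\neq 0\}.
\end{displaymath}
Near each $p\in H_{j,\beta}$, the implicit function theorem gives an open neighborhood $V_p$ in which ${(\partial^{\beta}h_J)}^{-1}(0)\cap V_p$ is a smooth embedded hypersurface of dimension $n-1$. The space $U_j$ is second countable, so $H_{j,\beta}$ is covered by countably many such hypersurface pieces.

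Next take $x\in\mathcal{D}\setminus F^J_\infty$ with $x\in U_j$. By the remark following the definition of $F^J_\infty$, vanishing of all covariant derivatives at $x$ is equivalent to vanishing of all coordinate partial derivatives at $x$. This uses the same induction as in the proof of Lemma~\ref{lem_3_2}: the lower-order terms carry Christoffel-symbol coefficients. Hence $x\notin F^J_\infty$ means some $\partial^\alpha h_J(x)\neq0$; choose such an $\alpha$ of minimal order. Since $h_J(x)=0$, we have $|\alpha|\ge1$. In fact $|\alpha|\ge2$, because $h_J\ge0$ attains a minimum at $x$, though $|\alpha|\ge1$ already suffices. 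Pick $i$ with $\alpha_i>0$ and set $\beta=\alpha-\bm{e}_i$. By minimality $\partial^\beta h_J(x)=0$, while $\partial_i\partial^\beta h_J(x)\neq0$, so $x\in H_{j,\beta}$. Therefore
\begin{displaymath}
	\mathcal{D}\setminus F^J_\infty\subset\bigcup_{j,\beta}H_{j,\beta},
\end{displaymath}
which is a countable union of embedded $(n-1)$-dimensional hypersurfaces.

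Finally, each embedded hypersurface piece has Riemannian volume zero, since in the chart it is locally a graph over $n-1$ coordinates. Countable subadditivity then gives $\vol(\mathcal{D}\setminus F^J_\infty)=0$. The set $\mathcal{D}\setminus F^J_\infty$ is Borel, being the difference of the closed sets $h_J^{-1}(0)$ and $F^J_\infty$, so its volume is defined.

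The argument needs $h_J$ to be $C^\infty$. This holds because $q_J^2(x,\bm{v})$ is smooth on $T^1M$ (Lemma~\ref{lem_5_1}), and integrating it against the smoothly varying fiber measure $\sigma_x$ preserves smoothness: in a local orthonormal frame the fiber is identified with the fixed sphere $S^{n-1}$, and we differentiate under the integral sign. The only other point needing care is the equivalence between covariant and coordinate infinite-order vanishing, which the induction above handles.
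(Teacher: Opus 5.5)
Your proposal is correct and follows essentially the same route as the paper. It uses a countable atlas, turns the sets $H_{j,\beta}$ into countably many hypersurface pieces via the implicit function theorem and second countability, and for $x\in\mathcal{D}\setminus F^{J}_{\infty}$ chooses a minimal-order $\alpha$ and sets $\beta=\alpha-\bm{e}_i$ to place $x$ in $H_{j,\beta}$. You also check smoothness of $h_J$, Borel measurability, and the equivalence of covariant and coordinate infinite-order vanishing; the paper leaves these implicit, and your treatment of them is sound.
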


\begin{proof}
	Choose a countable coordinate atlas $(U_{j},\psi_{j})$ covers $M$. In each chart $U_{j}$ and for each multi-index $\beta$, consider
	\begin{displaymath}
		H_{j,\beta}=\left\{y\in U_{j}\mid \partial^\beta h_J(y)=0,\; d{(\partial^\beta h_J)}_y\ne0\right\}.
	\end{displaymath}
	By the Implicit Function Theorem, for any point $p\in H_{j,\beta}$, we can find a sufficiently small neighborhood $V_{p}\subset U_{j}$ of it, such that
	\begin{displaymath}
		H_{j,\beta}\cap V_{p}={(\partial^{\beta}h_{J})}^{-1}(0) \cap V_{p}
	\end{displaymath}
	is a smooth hypersurface. Note that $U_j$ is second countable, hence the relative open cover $H_{j,\beta}\cap V_{p}$ for all $p\in H_{j,\beta}$ has a countable open cover.

	Now in some $U_{j}$, choose $x\in \mathcal{D}\setminus F_\infty^J$, choose a derivative $\partial^{\alpha} h_J(x)\ne0$ of minimal order. Select $i$ with $\alpha_{i}>0$ and set $\beta=\alpha-e_{i}$. Then
	\begin{displaymath}
		\partial^\beta h_J(x)=0,\qquad \partial_{i}\partial^{\beta} h_J(x)\ne0.
	\end{displaymath}

	Thus, $x\in H_{j,\beta}$. Therefore, this set $\mathcal{D}\setminus F^{J}_{\infty}$ is contained in the union of all possible $H_{j,\beta}$, each of which is covered by a collection of countably many hypersurfaces. Since $j$ and the multi-indices $\beta$ are countable, the countable cover of $\mathcal{D}\setminus F^{J}_{\infty}$ follows, leading to that
	\begin{displaymath}
		\vol(\mathcal{D}\setminus F^{J}_{\infty})=0.
	\end{displaymath}

	Equivalently, we can see that $\vol(\mathcal{D})=\vol(F^{J}_{\infty})$.
\end{proof}

\begin{remark}
	This result actually holds for any smooth function $f(x)$ defined on a smooth manifold of dimension $n$. On surfaces, we have $\area(F_{\infty})=\area (\{x\mid K(x)=0\})$.
\end{remark}

\begin{lemma}\label{lem_5_3}
	If $x\notin \mathcal{D}$, we have $q_J(x,\cdot):T^{1}_{x}M\to\mathbb{R}$ is not constantly $0$, and
	\begin{displaymath}
		\sigma_{x}\left( \left\{\bm{v}\in T^{1}_{x}M\mid q_{J}(x,\bm{v})=0\right\} \right)=0.
	\end{displaymath}
\end{lemma}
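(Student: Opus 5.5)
The plan is to exploit Lemma~\ref{lem_5_1}: for fixed $x$, the function $q_J^x$ is the restriction to $T^1_xM\simeq S^{n-1}$ of a homogeneous polynomial $Q_x$ of degree $2(n-1)$ on $T_xM\simeq\mathbb{R}^n$. We then show that the zero set of a nonzero such restriction has $\sigma_x$-measure zero.

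\emph{Step 1: the non-vanishing.} Since $x\notin\mathcal{D}=h_J^{-1}(0)$, we have $h_J(x)=\int_{T^1_xM}q_J^2(x,\bm{v})\,\mathrm{d}\sigma_x>0$. Because $q_J(x,\cdot)$ is continuous, it cannot vanish identically on the sphere. Hence $Q_x\not\equiv 0$ on $S^{n-1}$, and by homogeneity $Q_x$ is a nonzero polynomial on $\mathbb{R}^n$.

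\emph{Step 2: the zero set of $Q_x$ on $\mathbb{R}^n$ is Lebesgue null.} We use the standard induction on $n$ via Fubini. Write $Q_x$ as a polynomial in $v_n$ with coefficients that are polynomials in $(v_1,\dots,v_{n-1})$, and pick a coefficient that is not identically zero. By induction, this coefficient vanishes only on a null set of $\mathbb{R}^{n-1}$. Off that set, the one-variable polynomial in $v_n$ is nonzero and so has finitely many roots. Fubini then gives Lebesgue measure zero in $\mathbb{R}^n$. Alternatively, one could cite that a nonzero real-analytic function has a null zero set.

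\emph{Step 3: transfer to the sphere.} This is the main point needing care, since $\sigma_x$ lives on a null set of $\mathbb{R}^n$ and we cannot simply intersect. By homogeneity, the zero set $Z\subset S^{n-1}$ of $q_J^x$ generates the cone $C=\{r\bm{v}: r>0,\ \bm{v}\in Z\}$, and $C$ is exactly the zero set of $Q_x$ away from the origin. In polar coordinates, Lebesgue measure on $\mathbb{R}^n$ is $\mathrm{d}\mathcal{L}=c_n\,r^{n-1}\,\mathrm{d}r\,\mathrm{d}\sigma_x$. Therefore
\[
\mathcal{L}\bigl(C\cap\{r<1\}\bigr)=\frac{c_n}{n}\,\sigma_x(Z),
\]
and since the left side vanishes by Step 2, we get $\sigma_x(Z)=0$.

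Measurability causes no trouble, since $Z$ is closed. I expect no real obstacle. The only subtlety is the polar-coordinate transfer, which avoids treating the restriction to the sphere as if it had its own algebraic structure.
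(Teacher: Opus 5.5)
Your proposal is correct and follows essentially the same route as the paper. Lemma~\ref{lem_5_1} gives a homogeneous polynomial that is nonzero when $x\notin\mathcal{D}$; its zero set is Lebesgue-null in $\mathbb{R}^n$, and homogeneity together with polar coordinates transfers this to a $\sigma_x$-null set on the sphere. Your truncation to $\{r<1\}$ is slightly cleaner than the paper's formula, whose radial factor $\int_0^\infty t^{n-1}\,\mathrm{d}t$ diverges, and you also prove the null-zero-set fact for polynomials that the paper only cites.
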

This is the generalization of Lemma~\ref{lem_3_2} to higher dimensional fibers.

\begin{proof}
	Fix $x$ and some local chart $(U,\psi)$. By Lemma~\ref{lem_5_1}, $q_{J}^{x}$ is a nonzero homogeneous polynomial of degree $2(n-1)$ with constant coefficients (only depends on $x$) restricted on the sphere $S^{n-1}$. Denote the polynomial by $P$ and the set of zeros of $P$ in $T_{x}M\simeq \mathbb{R}^{n}$ by $Z(P)$.

	By the homogeneity of $q_{J}^{x}$, if a point $\omega\in S^{n-1}$ is a zero point, then the entire line passing through the origin and $\omega$ is also contained in the zero points set. By switching to hyperspherical coordinate and use the Fubini's theorem, we have the Lebesgue measure of $Z(P)$ in $\mathbb{R}^n$ can be computed by
	\begin{displaymath}
		0=\operatorname{Leb}(Z(P))=\omega_{n-1}\int_0^{\infty} t^{n-1}\sigma_x(Z(P)\cap S^{n-1})\,\mathrm{d}t.
	\end{displaymath}
	It is a standard fact that the set of zeros of a nonzero multivariate polynomial on $\mathbb{R}^{n}$ has Lebesgue measure zero. Therefore, the set of zeros of $q_{J}(x,\cdot)$ on $T^{1}_{x}M$ has measure zero.
\end{proof}

\begin{lemma}\label{lem_5_4}
	Suppose $\bm{v}\in T^{1}_{x}M$ is singular, then $q_J(x,\bm{v})=0$.
\end{lemma}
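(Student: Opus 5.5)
The plan is to produce an explicit nonzero vector in $\ker J^{\perp}_{\bm{v}}$ and then use the equivalence recorded after Definition~\ref{def_5_3}: $q_J(x,\bm{v})=0$ if and only if $J^{\perp}_{\bm{v}}$ has nontrivial kernel. The argument takes two steps.

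\emph{Step 1: unpack singularity at $t=0$.} Since $\bm{v}$ is singular, the description of $\sing$ given at the start of this section yields some $\bm{0}\neq\bm{w}\in E_{\bm{v}}$ with
\begin{displaymath}
	R(P_{\bm{v},t}(\bm{w}),c'_{\bm{v}}(t))c'_{\bm{v}}(t)=0 \quad\text{for all } t\in\mathbb{R}.
\end{displaymath}
Equivalently, $\rank(\bm{v})\ge 2$ gives a nonzero parallel normal Jacobi field $Y(t)=P_{\bm{v},t}(\bm{w})$. Because $Y$ is parallel, $Y''=0$, so the Jacobi equation $Y''+R(Y,c')c'=0$ forces $R(Y,c')c'=0$ along the whole geodesic.

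Evaluating at $t=0$, where $P_{\bm{v},0}=\mathrm{id}$ and $c'_{\bm{v}}(0)=\bm{v}$, gives $R(\bm{w},\bm{v})\bm{v}=0$. By Definition~\ref{def_5_2} this says $J^{\perp}_{\bm{v}}(\bm{w})=-R(\bm{w},\bm{v})\bm{v}=\bm{0}$. Since $\bm{w}\in E_{\bm{v}}$ is nonzero, the kernel of $J^{\perp}_{\bm{v}}$ is nontrivial.

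\emph{Step 2: conclude.} A linear map on the finite-dimensional space $E_{\bm{v}}$ with nontrivial kernel has zero determinant, so $q_J(x,\bm{v})=\det(J^{\perp}_{\bm{v}})=0$. Alternatively, in the eigenbasis one of the $\kappa_i$ vanishes, and the product formula for $q_J$ gives zero.

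The only point needing care is the passage from the rank definition to the pointwise curvature condition, namely that a parallel field is Jacobi exactly when $R(Y,c')c'=0$. This is standard and is already encoded in the displayed characterization of $\sing$, so no step presents a real obstacle. The same argument applied at any $t$ also shows that $q_J$ vanishes along the entire orbit $\phi_t(\bm{v})$, which will be useful later in combination with Lemma~\ref{lem_5_3}.
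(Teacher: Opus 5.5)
Your proposal is correct and follows the paper's proof: a nonzero parallel normal Jacobi field $Y$ gives a nonzero $\bm{w}=Y(0)\in\ker J^{\perp}_{\bm{v}}$, so $q_J(x,\bm{v})=\det(J^{\perp}_{\bm{v}})=0$. The only difference is that you spell out the Jacobi-equation step, namely that $Y''=0$ forces $R(Y,c'_{\bm{v}})c'_{\bm{v}}=0$; the paper leaves this implicit in this lemma, though it writes the same step out in the proof of Theorem~\ref{thm_6_1}.
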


\begin{proof}
	By definition, $\bm{v}$ is singular, we can find a nonzero parallel normal Jacobi field $Y$ with $\bm{w}=Y(0)$. Then, $J^{\perp}_{\bm{v}}(\bm{w})=0$, $\ker J^{\perp}_{\bm{v}}$ is non-trivial, $q_J(x,\bm{v})=0$.
\end{proof}

We are ready to prove Theorem~\ref{thm_2_4}:

\manifold*

\begin{proof}
	By Lemma~\ref{lem_5_4}, $\sing$ is a subset of the set of zeros $Z(q_{J})$ of $q_J$. Therefore, we have
	\begin{displaymath}
		\begin{aligned}
			\mu_{L}(\sing) & \leq \mu_{L}(Z(q_{J}))                                                                                                                   \\
			               & =\dfrac{1}{\vol(M)}\int_{M}\sigma_{x}(Z(q_{J})\cap T^{1}_{x}M)\,\mathrm{d}\vol(x)                                                        \\
			               & =\dfrac{1}{\vol(M)}\left(\int_{\mathcal{D}}+\int_{M\setminus \mathcal{D}}\right) \sigma_{x}(Z(q_{J})\cap T^{1}_{x}M)\,\mathrm{d}\vol(x).
		\end{aligned}
	\end{displaymath}

	By Lemma~\ref{lem_5_3}, for any $x\in M\setminus \mathcal{D}$, $\sigma_{x}(Z(q_{J})\cap T^{1}_{x}M)=0$, thus the integration is also $0$. For $x\in \mathcal{D}$, we have $q_{J}(x,\cdot)=0$ on the whole $T^{1}_{x}M$, with the fiber measure of the set of zero $\sigma_{x}(Z(q_{J})\cap T^{1}_{x}M)=1$. Combine them together, we obtain that
	\begin{displaymath}
		\mu_{L}(\sing)\le \dfrac{1}{\vol(M)}\int_{\mathcal{D}}1\,\mathrm{d}\vol(x)=\dfrac{\vol(\mathcal{D})}{\vol(M)}.
	\end{displaymath}

	By Lemma~\ref{lem_5_2}, $\vol(\mathcal{D}\setminus F^{J}_{\infty})=0$. Therefore, we have
	\begin{displaymath}
		\mu_{L}(\sing)\leq\dfrac{\vol(F^{J}_{\infty})}{\vol(M)}.
	\end{displaymath}

	In particular, if $F^{J}_{\infty}$ has zero measure, then the geodesic flow is ergodic.
\end{proof}

\section{Ergodicity on Real Analytic Rank One Manifolds}\label{sec6}
We now prove Theorem~\ref{thm_2_5} in the real analytic setting.

\analymlfd*

Again, our target is to show the singular set has zero Liouville measure. Similar notations as in~\ref{sec5} will be used. Let $x\in M$ and $\bm{v}\in T^{1}_{x}M$, recall $E_{\bm{v}}$ denote the orthogonal complement of $\bm{v}$ and $P_{\bm{v},t}:E_{\bm{v}}\to E_{\phi_{t}(\bm{v})}$ denote the parallel transport along $c_{\bm{v}}$.

\begin{definition}\label{def_6_1}
	On $E_{\bm{v}}$ we define the parallel-transport pullback of the curvature endomorphism $A(\bm{v},t):E_{\bm{v}}\to E_{\bm{v}}$ by
	\begin{equation}
		A(\bm{v},t)(\bm{w})=P^{-1}_{\bm{v},t}\circ R(P_{\bm{v},t}(\bm{w}),c'_{\bm{v}}(t))c'_{\bm{v}}(t).
	\end{equation}

	Then, $A(\bm{v},t)$ is a self-adjoint linear operator.
\end{definition}

\begin{proof}
	We first show that this operator is well-defined. Fix $\bm{v}$ and choose $\bm{w}\in E_{\bm{v}}$, we need to show $A(\bm{v},t)(\bm{w})\in E_{\bm{v}}$ as well. Denote $\bm{w}_t=P_{\bm{v},t}\bm{w}$, the curvature symmetries imply
	\begin{displaymath}
		\langle R(\bm{w}_t,c'_{\bm{v}}(t))c'_{\bm{v}}(t),c'_{\bm{v}}(t)\rangle=0.
	\end{displaymath}
	Thus, the curvature vector lies in $E_{\phi_{t}(\bm{v})}$, so $A(\bm{v},t)$ is well-defined.

	Linearity follows immediately from the fact that $P_{\bm{v},t}$, its inverse, and the curvature endomorphism are all linear maps. To verify that it is self-adjoint, choose $\bm{u},\bm{w}\in E_{\bm{v}}$ and denote $\bm{u}_t=P_{\bm{v},t}(\bm{u})$ and $\bm{w}_t=P_{\bm{v},t}(\bm{w})$.

	\begin{displaymath}
		\begin{aligned}
			\langle A(\bm{v},t)(\bm{u}),\bm{w}\rangle & =\langle P^{-1}_{\bm{v},t}\circ R(\bm{u}_t,c'_{\bm{v}}(t))c'_{\bm{v}}(t),\bm{w}\rangle \\
			                                          & =\langle R(\bm{u}_t,c'_{\bm{v}}(t))c'_{\bm{v}}(t),\bm{w}_{t}\rangle                    \\
			                                          & =R(\bm{u}_t, c'_{\bm{v}}(t),c'_{\bm{v}}(t),\bm{w}_t)                                   \\
			                                          & =R(\bm{w}_t,c'_{\bm{v}}(t),c'_{\bm{v}}(t),\bm{u}_t)                                    \\
			                                          & =\langle A(\bm{v},t)\bm{w},\bm{u}\rangle.
		\end{aligned}
	\end{displaymath}

	Therefore, $A^{\ast}(\bm{v},t)=A(\bm{v},t)$ is self-adjoint.
\end{proof}

\begin{definition}\label{def_6_2}
	For any $\bm{v}\in T^{1}M$, define the operator $Q(\bm{v}):E_{\bm{v}}\to E_{\bm{v}}$ by
	\begin{displaymath}
		Q(\bm{v})(\bm{w})=\int_{-1}^{1} A^{\ast}(\bm{v},t)\circ A(\bm{v},t)(\bm{w})\,\mathrm{d}t.
	\end{displaymath}

	Then, $Q(\bm{v})$ is self-adjoint and positive semidefinite.
\end{definition}

\begin{proof}
	Choose any $\bm{w}\in E_{\bm{v}}$. We can see that $Q(\bm{v})$ is well-defined from $E_{\bm{v}}$ to itself. This is because $A(\bm{v},t)$ preserves $E_{\bm{v}}$, so does $A^{\ast}(\bm{v},t)\circ A(\bm{v},t)$. This holds for any $t\in (-1,1)$ and hence, the integral also lies in $E_{\bm{v}}$. $A^{\ast}(\bm{v},t)A(\bm{v},t)$ is self-adjoint for all $t$, so is $Q(\bm{v})$. To show that it is positive semidefinite,

	\begin{displaymath}
		\begin{aligned}
			\langle Q(\bm{v})(\bm{w}),\bm{w}\rangle & =\langle \int_{-1}^{1} A^{\ast}(\bm{v},t)\circ A(\bm{v},t)(\bm{w})\,\mathrm{d}t,\bm{w}\rangle \\
			                                        & =\int_{-1}^{1}\langle A^{\ast}(\bm{v},t)\circ A(\bm{v},t)(\bm{w}),\bm{w}\rangle\,\mathrm{d}t  \\
			                                        & =\int_{-1}^{1}\langle A(\bm{v},t)(\bm{w}),A(\bm{v},t)(\bm{w})\rangle\,\mathrm{d}t             \\
			                                        & =\int_{-1}^{1}{\lVert A(\bm{v},t)(\bm{w})\rVert}^2\,\mathrm{d}t\geq 0.
		\end{aligned}
	\end{displaymath}
\end{proof}

\begin{definition}\label{def_6_3}
	Define the function $D:T^{1}M\to\mathbb{R}$ by
	\begin{equation}
		D(\bm{v})\coloneq \det Q(\bm{v}).
	\end{equation}

	From Definition~\ref{def_6_2}, we have $Q(\bm{v})$ positive semidefinite, thus, $D(\bm{v})\ge 0$.
\end{definition}

One important property is that in real analytic setting, the set of zeros of $D$ defined in Definition~\ref{def_6_3} characterize the singular set of $T^{1}M$.

\begin{theorem}\label{thm_6_1}
	A vector $\bm v\in T^1M$ is singular if and only if $D(\bm{v})=0$.
\end{theorem}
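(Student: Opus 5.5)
The plan is to use that $Q(\bm{v})$ is positive semidefinite (Definition~\ref{def_6_2}). Then $D(\bm{v})=0$ holds if and only if $\ker Q(\bm{v})\neq\{0\}$. Since
\begin{displaymath}
\langle Q(\bm{v})\bm{w},\bm{w}\rangle=\int_{-1}^{1}\lVert A(\bm{v},t)\bm{w}\rVert^{2}\,\mathrm{d}t ,
\end{displaymath}
and the integrand is continuous in $t$, a vector $\bm{w}\in E_{\bm{v}}$ lies in $\ker Q(\bm{v})$ exactly when $A(\bm{v},t)\bm{w}=0$ for all $t\in[-1,1]$. For a positive semidefinite operator, $\langle Q\bm{w},\bm{w}\rangle=0$ is equivalent to $Q\bm{w}=0$. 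So the theorem reduces to one claim: $\bm{v}$ is singular if and only if some $0\neq\bm{w}\in E_{\bm{v}}$ satisfies $R(P_{\bm{v},t}\bm{w},c'_{\bm{v}}(t))c'_{\bm{v}}(t)=0$ for all $t\in[-1,1]$.

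The direction ``singular $\Rightarrow D=0$'' is immediate. By the characterization of $\sing$ at the start of Section~\ref{sec5}, there is a nonzero $\bm{w}$ with $A(\bm{v},t)\bm{w}=0$ for all $t\in\mathbb{R}$, and in particular on $[-1,1]$. This direction needs no analyticity.

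For the converse, suppose $A(\bm{v},t)\bm{w}=0$ on $[-1,1]$. I will show that $t\mapsto A(\bm{v},t)\bm{w}$ is real analytic on $\mathbb{R}$. The metric is real analytic, so the geodesic equation has analytic coefficients and $c_{\bm{v}}$ is analytic in $t$. Parallel transport along it solves a linear ODE with analytic coefficients, so $P_{\bm{v},t}$ and $P_{\bm{v},t}^{-1}$ are analytic. The curvature tensor is analytic, so the composition $A(\bm{v},t)\bm{w}$ is an analytic $E_{\bm{v}}$-valued function of $t\in\mathbb{R}$. More concretely, in a parallel orthonormal frame along $c_{\bm{v}}$ its components are $\langle R(\bm{w}_t,c'_{\bm{v}})c'_{\bm{v}},\bm{e}_i(t)\rangle$, and these are analytic.

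A real analytic function on the connected set $\mathbb{R}$ that vanishes on an interval vanishes identically, by the identity theorem. Hence $R(P_{\bm{v},t}\bm{w},c'_{\bm{v}}(t))c'_{\bm{v}}(t)=0$ for all $t$. The field $Y(t)=P_{\bm{v},t}\bm{w}$ is parallel, normal and nonzero, and it satisfies the Jacobi equation $Y''+R(Y,c')c'=0$, so it is a parallel normal Jacobi field. Therefore $\rank(\bm{v})\ge2$ and $\bm{v}\in\sing$.

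The main obstacle is the analyticity step. One must check that analyticity holds globally in $t$ along the whole geodesic, not just locally, so that the identity theorem propagates vanishing from $[-1,1]$ to all of $\mathbb{R}$. This follows because analytic ODE solutions are analytic on their maximal interval, which here is $\mathbb{R}$ by completeness. Alternatively, one can argue by the connectedness of $\{t: \text{all derivatives of } A(\bm{v},t)\bm{w} \text{ vanish}\}$, which is both open and closed.
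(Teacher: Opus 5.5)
Your proposal is correct and follows essentially the same route as the paper. In the forward direction you take $\bm{w}$ with $A(\bm{v},t)\bm{w}\equiv 0$, so that $Q(\bm{v})\bm{w}=0$; in the converse you extract $A(\bm{v},t)\bm{w}=0$ on $[-1,1]$ from $\langle Q(\bm{v})\bm{w},\bm{w}\rangle=0$ and propagate it to all $t\in\mathbb{R}$ via real analyticity and the identity theorem. Your added justification that $t\mapsto A(\bm{v},t)\bm{w}$ is analytic on the whole line (analytic ODE solutions extend over the maximal interval, which is $\mathbb{R}$ by completeness) fills in a step the paper only asserts.
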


\begin{proof}
	Suppose that $\bm{v}$ lies in the singular set, $\rank(\bm{v})\ge 2$. By the definition of rank, we can find a nonzero parallel normal Jacobi field $Y$ along $c_{\bm{v}}$. Denote $\bm{w}=Y(0)\in E_{\bm{v}}$ which is a nonzero vector and $Y(t)=P_{\bm{v},t}(\bm{w})$.

	$Y(t)$ should satisfy the Jacobi equation, that is
	\begin{displaymath}
		\dfrac{\mathrm{D}^2 Y}{\mathrm{d}t^{2}}+R(Y(t),c'_{\bm{v}}(t))c'_{\bm{v}}(t)=0.
	\end{displaymath}

	Since $Y(t)$ is the parallel vector field, $\dfrac{\mathrm{D}Y}{\mathrm{d}t}(t)=0$, thus, $\dfrac{\mathrm{D}^{2}Y}{\mathrm{d}t^2}(t)=0$. Therefore, we get
	\begin{displaymath}
		R(Y(t),c'_{\bm{v}}(t))c'_{\bm{v}}(t)=0.
	\end{displaymath}

	Hence,
	\begin{displaymath}
		A(\bm{v},t)(\bm{w})=P^{-1}_{\bm{v},t}\circ R(P_{\bm{v},t}(\bm{w}),c'_{\bm{v}}(t))c'_{\bm{v}}(t)=0,
	\end{displaymath}
	$Q(\bm{v})(\bm{w})=0$ and $D(\bm{v})=0$.

	Conversely, suppose that $D(\bm{v})=0$, then $\ker Q(\bm{v})$ is non-trivial. Choose a nonzero vector $\bm{w}\in \ker Q(\bm{v})\subset E_{\bm{v}}$, we have
	\begin{displaymath}
		0=\langle Q(\bm{v})(\bm{w}),\bm{w}\rangle=\int_{-1}^{1} {\lVert A(\bm{v},t)(\bm{w})\rVert}^{2}\,\mathrm{d}t.
	\end{displaymath}

	$A(\bm{v},t)(\bm{w})$ is continuous, this implies $A(\bm{v},t)(\bm{w})=0$ for $t\in (-1,1)$. Since the metric is real analytic, geodesics and parallel transport depend real analytically on time. Thus, $t\mapsto A(\bm{v},t)\bm{w}$ is real analytic on $\mathbb{R}$. By the unique continuation of the analytic function, we get
	\begin{displaymath}
		A(\bm{v},t)(\bm{w})=0,\qquad t\in\mathbb{R}.
	\end{displaymath}

	Now let $Y(t)=P_{\bm{v},t}(\bm{w})$, from the above argument, we have
	\begin{displaymath}
		R(Y(t),c'_{\bm{v}}(t))c'_{\bm{v}}(t)=0,\qquad t\in\mathbb{R}.
	\end{displaymath}

	Together with $Y(t)$ is parallel, the second order derivative is also zero. $Y(t)$ satisfies the Jacobi equation, which is a nonzero parallel normal Jacobi field along $c_{\bm{v}}$. Thus, $\rank(\bm{v})\ge 2$, $\bm{v}\in\sing$.
\end{proof}

Furthermore, we can evaluate the rank of the vector $\bm{v}$ by study the operator $Q(\bm{v})$.

\begin{corollary}
	In the setting of real analytic metric,
	\begin{displaymath}
		\rank(\bm{v})=1+\dim\ker Q(\bm{v}).
	\end{displaymath}
\end{corollary}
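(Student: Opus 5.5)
The plan is to identify $\ker Q(\bm{v})$ with the space of initial values of parallel normal Jacobi fields along $c_{\bm{v}}$. The rank formula then follows from the definition of rank, with the convention read as one plus the dimension of the vector space of parallel normal Jacobi fields. Concretely, let
\begin{displaymath}
	\mathcal{P}(\bm{v})=\{Y \mid Y \text{ a parallel normal Jacobi field along } c_{\bm{v}}\},
\end{displaymath}
which is a linear space. Consider the evaluation map $\mathrm{ev}:\mathcal{P}(\bm{v})\to E_{\bm{v}}$, $Y\mapsto Y(0)$. It is linear and injective, since a parallel field is determined by its value at $t=0$ via $Y(t)=P_{\bm{v},t}(Y(0))$. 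It therefore suffices to show that $\mathrm{ev}(\mathcal{P}(\bm{v}))=\ker Q(\bm{v})$. Then $\dim\mathcal{P}(\bm{v})=\dim\ker Q(\bm{v})$, and $\rank(\bm{v})=1+\dim\ker Q(\bm{v})$.

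For the inclusion $\mathrm{ev}(\mathcal{P}(\bm{v}))\subset\ker Q(\bm{v})$, I would repeat the first half of the proof of Theorem~\ref{thm_6_1}. If $Y\in\mathcal{P}(\bm{v})$, parallelism kills $\mathrm{D}^2Y/\mathrm{d}t^2$, so the Jacobi equation gives $R(Y(t),c'_{\bm{v}}(t))c'_{\bm{v}}(t)=0$. Hence $A(\bm{v},t)(Y(0))=0$ for all $t$, and integrating $A^{\ast}A$ over $[-1,1]$ gives $Q(\bm{v})(Y(0))=0$.

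For the reverse inclusion, take $\bm{w}\in\ker Q(\bm{v})$, including $\bm{w}=0$. Positive semidefiniteness from Definition~\ref{def_6_2} gives
\begin{displaymath}
	0=\langle Q(\bm{v})\bm{w},\bm{w}\rangle=\int_{-1}^{1}\lVert A(\bm{v},t)\bm{w}\rVert^2\,\mathrm{d}t .
\end{displaymath}
Continuity then forces $A(\bm{v},t)\bm{w}=0$ on $(-1,1)$. Real analyticity of $t\mapsto A(\bm{v},t)\bm{w}$ extends this to all $t\in\mathbb{R}$, exactly as in Theorem~\ref{thm_6_1}. So $Y(t)=P_{\bm{v},t}\bm{w}$ is parallel, normal (parallel transport preserves orthogonality to $c'_{\bm{v}}$), and satisfies the Jacobi equation. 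Thus $Y\in\mathcal{P}(\bm{v})$ with $Y(0)=\bm{w}$.

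The only point needing care, and the main obstacle, is the analytic continuation step. One must justify that $t\mapsto A(\bm{v},t)\bm{w}$, viewed as a curve in the fixed space $E_{\bm{v}}$, is real analytic on all of $\mathbb{R}$. I would argue this by writing everything in a parallel orthonormal frame along $c_{\bm{v}}$. That frame solves a linear ODE with real analytic coefficients, because $c_{\bm{v}}$ is analytic for an analytic metric. The components of $A(\bm{v},t)$ in this frame are then curvature components evaluated along an analytic curve, hence analytic in $t$. Everything else is linear algebra, together with the observation that the linear structure is respected, so that the dimensions match.
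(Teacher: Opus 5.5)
Your proof is correct and takes essentially the paper's approach: the paper gives no separate proof, treating the corollary as immediate from the two directions of the proof of Theorem~\ref{thm_6_1}, and you make that explicit by showing that $Y\mapsto Y(0)$ is a linear isomorphism from the space of parallel normal Jacobi fields onto $\ker Q(\bm{v})$. Your justification of analyticity in $t$, via a parallel orthonormal frame solving a linear ODE with analytic coefficients, fills in a step the paper only asserts.
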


We are ready to prove the Theorem~\ref{thm_2_5} now.

\begin{proof}[Proof of Theorem~\ref{thm_2_5}]
	In local real analytic frames of the normal bundle, the matrix entries of $A(\bm{v},t)$ are jointly real analytic in $(\bm{v},t)$. Integration over the fixed interval $(-1,1)$ therefore yields a real analytic endomorphism $Q$. Consequently, $D=\det Q$ is a globally defined real analytic function on $T^{1}M$.
	Given $\rank(M)=1$, we can find some $\bm{v}\in T^{1}M$ with rank $1$. By Theorem~\ref{thm_6_1}, $D(\bm{v})>0$, $D$ is not identically zero.

	Since the metric is analytic, the function $D:T^{1}M\to \mathbb{R}$ is real analytic. Thus, the set of zeros of $D$ on $T^1 M$ is a proper analytic subset, which has zero Liouville measure. Following from Theorem~\ref{thm_6_1},
	\begin{displaymath}
		\mu_{L}(\sing)=\mu_{L}(D^{-1}(0))=0,
	\end{displaymath}
	ergodicity of the geodesic flow follows.
\end{proof}

\begin{remark}
	If we only assume $C^{\infty}$-smoothness, what we can show is the singular set $\Lambda$ is a subset of the set of zeros $D^{-1}(0)$. The set of zeros of a smooth function may have positive measure, thus, this method cannot be applied to smooth manifolds directly.
\end{remark}

\section*{Acknowledgments}
The authors wish to express their gratitude to Professor Lan Wen for his guidance and support during our time working and studying at Peking University.

Fei Liu is partially supported by Natural Science Foundation of China under Grant No.~12571179.

Xiaokai Liu would love to thank Professor Raul Ures and Professor Jana Hertz for their invaluable guidance and continuous support at Southern University of Science and Technology.

\noindent{\bfseries Declaration of Interest:} The authors declare that they have no conflicts of interest.

\newpage
\printbibliography

\end{document}